\newtheorem{theorem}{Theorem}[section]
\newtheorem{corollary}[theorem]{Corollary}
\newtheorem{lemma}[theorem]{Lemma}
\newtheorem{proposition}[theorem]{Proposition}
\newtheorem{prop-def}[theorem]{Proposition-Definition}
\theoremstyle{definition}
\newtheorem{definition}[theorem]{Definition}
\newtheorem{remark}[theorem]{Remark}
\newtheorem{example}[theorem]{Example}
\numberwithin{equation}{section}
\def\op{{\rm op}}
\def\deg{{\rm deg}}
\def\Ext{{\rm Ext}}
\def\Hom{{\rm Hom}}
\def\N{\mathbb{N}}
\def\Z{\mathbb{Z}}
\def\1{\mathbbold{1}}
\def\GKdim{{\rm GKdim}\,}
\def\CM{{\rm CM}}
\def\gldim{{\rm gldim}\,}
\def\GrMod{\operatorname{GrMod}}
\def\grmod{\operatorname{grmod}}
\def\QGr{\operatorname{QGr}}
\def\qgr{\operatorname{qgr}}
\def\injdim{\operatorname{injdim}}
\def\End{\operatorname{End}}
\def\depth{\operatorname{depth}}
\begin{document}
\title[Maximal Cohen-Macaulay modules]
{Maximal Cohen-Macaulay modules over \\
a noncommutative 2-dimensional singularity}

\author{
%K.A. Brown, 
X.-S. Qin, Y.-H. Wang and J.J. Zhang}
%\address{Brown: School of Mathematics and Statistics\\
%University of Glasgow\\ Glasgow G12 8QW\\
%Scotland}
%\email{ken.brown@glasgow.ac.uk}

\address{Qin: School of Mathematical Sciences, 
Shanghai Center for Mathematical Sciences,
Fudan University, Shanghai 200433, China}

\email{13110840002@fudan.edu.cn}

\address{Wang: School of Mathematics,
Shanghai Key Laboratory of Financial Information Technology,
Shanghai University of Finance and Economics, Shanghai 200433, China}

\email{yhw@mail.shufe.edu.cn}

\address{Zhang: Department of Mathematics,
Box 354350,
University of Washington,
Seattle, WA 98195, USA}
\email{zhang@math.washington.edu}

%\thanks{}
\subjclass[2010]{Primary 16E65, 16S38, 14A22}

%16E65  View Publications (2000-now) Homological conditions on rings 
%       (generalizations of regular, Gorenstein
%16S38  View Publications (2000-now) Rings arising from non-commutative 
%       algebraic geometry
%14A22  View Publications (1991-now) Noncommutative algebraic geometry

\keywords{noncommutative quasi-resolution, Artin-Schelter 
regular algebra, Maximal Cohen-Macaulay module, pretzeled quivers}

\begin{abstract}
We study properties of graded maximal Cohen-Macaulay modules over an 
${\mathbb N}$-graded locally finite, Auslander Gorenstein, and 
Cohen-Macaulay algebra of dimension two. As a consequence, we extend 
a part of the McKay correspondence in dimension two to a more 
general setting.
\end{abstract}

\maketitle

\dedicatory{}%
\commby{}%
%\begin{center}
%\end{center}

\setcounter{section}{-1}

\section{Introduction}
A noncommutative version of the McKay correspondence in dimension 
two was developed in \cite{CKWZ1, CKWZ2, CKWZ3}. One of the main 
ingredients was the study of the invariant subrings of connected 
graded, noetherian, Artin-Schelter regular algebras of global 
dimension two under natural actions of quantum binary polyhedral 
groups. The McKay quivers \cite[Definition 2.9]{CKWZ3} of these 
quantum binary polyhedral groups are twisted versions of 
$\widetilde{A}\widetilde{D}\widetilde{E}$ graphs where the details
can be found in \cite[Proposition 7.1]{CKWZ1}. It was proved in 
\cite[Theorem B]{CKWZ3} that the McKay quiver is isomorphic to the 
Gabriel quiver \cite[Definition 2.8]{CKWZ3} of the smash product 
algebra corresponding to the action. 

The noncommutative singularities (or equivalently, their associated 
algebras) studied in \cite{CKWZ1, CKWZ2, CKWZ3} are usually far from 
commutative and do not satisfy a polynomial identity. For these 
noncommutative singularities we introduced the concept of a 
noncommutative quasi-resolution \cite[Definition 0.5]{QWZ} which 
generalizes Van den Bergh's noncommutative crepant resolution 
\cite{VdB1, VdB2}. The smash product constructions used in 
\cite{CKWZ1, CKWZ2, CKWZ3} are examples of noncommutative 
quasi-resolutions. Recently Reyes-Rogalski proved that the Gabriel 
quivers of non-connected, ${\mathbb N}$-graded, Artin-Schelter regular 
algebras of global dimension two are twisted versions 
(which are called {\it pretzeled quivers} in this paper) of the
$\widetilde{A}\widetilde{D}\widetilde{E}$ graphs. 

Recent study of the invariant theory of (non-connected graded) 
preprojective algebras under finite group actions initiated by 
Weispfenning \cite{We} suggests that one should extend the 
noncommutative McKay correspondence to a larger class of not 
necessarily connected, graded algebras. The aim of this short 
paper is to supply a small piece of the puzzle in this slightly 
more general version of the noncommutative McKay correspondence. 
Let $\Bbbk$ be a base field and let MCM (respectively, CM) 
stand for ``maximal Cohen-Macaulay'' (respectively, 
``Cohen-Macaulay''). We summarize the main results as follows.

\begin{theorem}
\label{xxthm0.1}
Let $A$ be a noetherian ${\mathbb N}$-graded locally finite 
algebra of Gelfand-Kirillov dimension two. Suppose that
\begin{enumerate}
\item[(a)]
$A$ has a balanced dualizing complex,
\item[(b)]
$A$ is Auslander Gorenstein and CM, and 
\item[(c)]
$A$ has a noncommutative quasi-resolution $B$.
\end{enumerate} 
Then 
\begin{enumerate}
\item[(1)]
$A$ is of finite Cohen-Macaulay type in the graded sense.
\item[(2)]
There is a one-to-one correspondence between the set of 
indecomposable MCM graded right $A$-modules up to degree shifts 
and isomorphisms and the set of graded simple right $B$-modules
up to degree shifts and isomorphisms.
\item[(3)]
Let $\{M_1,\cdots,M_d\}$ be a complete list of the 
indecomposable MCM graded right $A$-modules up to degree shifts 
and isomorphisms. Then, for some choice of integers 
$w_1,\cdots,w_d$, $C:=\End_{A}(\bigoplus_{i=1}^d M_i(w_i))$ 
is an ${\mathbb N}$-graded noncommutative quasi-resolution of $A$. 
As a consequence, $C$ is graded Morita equivalent to $B$. 
\item[(4)]
$A$ is a noncommutative graded isolated singularity.
\end{enumerate}
\end{theorem}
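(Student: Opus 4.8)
The plan is to identify the quasi-resolution $B$, up to graded Morita equivalence, as the endomorphism algebra of an MCM module, and then to run a graded noncommutative analogue of Auslander's characterization of finite Cohen-Macaulay type.

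The first and decisive step is to show that $B$ is graded Morita equivalent to $\End_A(M)$ for a single MCM graded right $A$-module $M$, with $B$ of finite global dimension, this smoothness being the essential content of ``resolution''. Granting this, the second step---which I expect to be the main obstacle---is the dimension-two, Auslander Gorenstein, graded analogue of Auslander's theorem: finiteness of $\gldim\End_A(M)$ forces $M$ to be a representation generator of the MCM modules, i.e.\ every indecomposable MCM graded $A$-module occurs, up to degree shift, as a direct summand of $M$. Because $M$ is finitely generated with finitely many indecomposable summands, this immediately yields (1), finite Cohen-Macaulay type in the graded sense.

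Once $M$ is known to be a representation generator, the additive category of MCM graded $A$-modules coincides with $\mathrm{add}(M)$, and the graded functor $\Hom_A(M,-)$ becomes an equivalence from MCM graded right $A$-modules to graded projective right $B$-modules, carrying indecomposables to indecomposables. Since $B$ has finite global dimension, its graded simple modules are exactly the tops of the indecomposable graded projectives; composing, the indecomposable MCM graded $A$-modules (up to shift) correspond bijectively to the graded simple right $B$-modules (up to shift), which is (2). For (3), taking $\{M_1,\dots,M_d\}$ to be the resulting complete list and choosing degree shifts $w_i$ so that $C=\End_A(\bigoplus_{i=1}^d M_i(w_i))$ is $\mathbb{N}$-graded, $C$ is the endomorphism algebra of the same MCM modules that assemble $B$; hence $C$ is graded Morita equivalent to $B$ and inherits the quasi-resolution property of $A$.

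Finally, (4) should follow from (1) by the standard implication finite Cohen-Macaulay type $\Rightarrow$ isolated singularity: with only finitely many indecomposable MCM modules, the stable category of MCM graded $A$-modules is Hom-finite, yielding the isolated-singularity condition. The technical weight of the argument is concentrated in the second step above---promoting $M$ to a representation generator from the smoothness of $B$---and in verifying that the reflexivity needed to make $\Hom_A(M,-)$ an equivalence of MCM modules with projectives holds under the depth-two Auslander Gorenstein CM hypotheses, with the $\mathbb{N}$-grading and all degree shifts tracked consistently throughout.
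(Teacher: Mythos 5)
Your outline inverts the paper's logic at its foundation, and the two steps on which everything rests are precisely the ones you leave unproved. Your ``first and decisive step'' --- that $B$ is graded Morita equivalent to $\End_A(M)$ for an MCM module $M$ --- is not a hypothesis available from the definition of a noncommutative quasi-resolution, and it is in fact a \emph{conclusion} of the theorem (part (3)). Definition \ref{xxdef1.9} only supplies bimodules ${}_BM_A$ and ${}_AN_B$ with $M\otimes_A N\cong_{0}B$ and $N\otimes_B M\cong_{0}A$, i.e.\ isomorphisms up to finite-dimensional kernels and cokernels; extracting an endomorphism-ring presentation of $B$ from this is exactly what requires the machinery you are trying to bypass, namely the equivalence $\qgr A\simeq \qgr B$ induced by the NQR bimodules (\cite[Lemma 3.5]{QWZ}), the bijection between $2$-pure objects in $\qgr$ and reflexive modules (Lemma \ref{xxlem2.9}, via the Gabber closure and the adjoint pair $(\pi,\omega)$), and the identification of reflexive with MCM modules under the depth-two hypotheses (Theorem \ref{xxthm2.10}, which uses the balanced dualizing complex and local duality --- this is where hypothesis (a) enters, and your proposal never uses it). The paper then gets (1) and (2) for free: MCM over $A$ $\leftrightarrow$ reflexive over $A$ $\leftrightarrow$ reflexive over $B$ $\leftrightarrow$ projective over $B$ (since $B$ is Auslander regular and CM, hence generalized AS regular by Lemma \ref{xxlem1.10}), with simples corresponding to indecomposable projectives. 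Your second step --- the converse Auslander theorem, that finiteness of $\gldim\End_A(M)$ forces $M$ to be a representation generator --- is a genuinely hard statement known in the commutative complete local dimension-two setting (with additional hypotheses such as $A\in\mathrm{add}(M)$ and normality), and no graded noncommutative analogue under the present hypotheses is cited or proved; you acknowledge it is ``the main obstacle,'' but without it your argument for (1) and (2) does not get off the ground. The paper's route avoids this theorem entirely.

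Part (4) has a separate gap: you invoke ``the standard implication finite Cohen-Macaulay type $\Rightarrow$ isolated singularity,'' but that is a commutative theorem (Auslander, Huneke--Leuschke) with no established analogue here, and in any case the paper's definition of graded isolated singularity (Definition \ref{xxdef1.11}, following Ueyama) is that $\QGr A$ has finite global dimension --- Hom-finiteness of a stable MCM category does not obviously yield that, especially for non-connected graded algebras. The paper instead proves (4) in one line: $\qgr A\simeq \qgr B$ and the global dimension of $\qgr B$ is bounded by that of $\grmod B$, which is finite since $B$ is regular. In summary, your projectivization strategy is a coherent alternative blueprint, but as written it assumes part (3) to prove parts (1) and (2), rests on an unproved noncommutative converse-Auslander theorem, and substitutes an unavailable implication for (4); each of these is a genuine gap rather than a routine verification.
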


One should compare Theorem \ref{xxthm0.1} with results in 
\cite[Section 3]{He} in the commutative case and 
\cite[Theorem 2.5]{Jo}, \cite[Corollary 4.5 and Theorem 5.7]{CKWZ3} 
in the noncommutative case.

\begin{theorem}
\label{xxthm0.2}
Let $A$ satisfy the hypotheses in 
Theorem \ref{xxthm0.1}. Further assume that the noncommutative 
quasi-resolution $B$ in Theorem \ref{xxthm0.1}(c) is standard in the 
sense of Definition \ref{xxdef1.3}. Then the following holds.
\begin{enumerate}
\item[(1)]
The Gabriel quiver of $B$ is a pretzeled quiver of a finite 
union of graphs of $\widetilde{A}\widetilde{D}\widetilde{E}$ type. 
\item[(2)]
The standard noncommutative quasi-resolution $B$ is unique
up to isomorphism.
\end{enumerate}
\end{theorem}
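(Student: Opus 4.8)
The plan is to deduce part (1) from the Reyes--Rogalski classification of Gabriel quivers of non-connected $\mathbb{N}$-graded Artin--Schelter regular algebras of global dimension two, and to deduce part (2) by characterizing the standard quasi-resolution as the basic algebra in its graded Morita class. For part (1) I would first reduce to a statement about $C:=\End_{A}(\bigoplus_{i=1}^{d}M_i(w_i))$: by Theorem \ref{xxthm0.1}(3) the standard quasi-resolution $B$ is graded Morita equivalent to $C$, and since the Gabriel quiver is a graded Morita invariant (it is read off from the graded simple modules and the $\Ext^{1}$ groups between them) it suffices to compute the Gabriel quiver of $C$. The integers $w_1,\dots,w_d$ in Theorem \ref{xxthm0.1}(3) should be chosen precisely so that $\Hom_{A}(M_i(w_i),M_j(w_j))_0=0$ for $i\neq j$, which makes $C_0$ semisimple and renders $C$ an honest non-connected $\mathbb{N}$-graded algebra of the type treated by Reyes--Rogalski. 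The principal remaining task is then to verify that $C$ is Artin--Schelter regular of global dimension two.

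To establish this I would argue as follows. The global-dimension bound $\gldim C=2$ should follow from an Auslander-type theorem: because $A$ is a two-dimensional Cohen--Macaulay algebra of finite Cohen--Macaulay type (Theorem \ref{xxthm0.1}(1)) and, crucially, a \emph{graded isolated singularity} (Theorem \ref{xxthm0.1}(4)), the groups $\Ext^{\geq 1}_{A}(M_i,M_j)$ are finite-dimensional, and every graded $C$-module is resolved in at most two steps by the MCM-approximation machinery over $A$. The Artin--Schelter Gorenstein condition on $C$ --- finite injective dimension together with the one-sided concentration of $\Ext^{*}_{C}(S,C)$ for the graded simple modules $S$ --- should be transported from $A$, using that $A$ is Auslander Gorenstein and Cohen--Macaulay and carries a balanced dualizing complex; concretely, the endomorphism-ring construction should convert the dualizing complex of $A$ into the data witnessing Gorensteinness of $C$. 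Once $C$ is known to be $\mathbb{N}$-graded Artin--Schelter regular of global dimension two, the Reyes--Rogalski theorem identifies its Gabriel quiver as a pretzeled quiver of an $\widetilde{A}\widetilde{D}\widetilde{E}$ graph; allowing $A$, and hence $C$ and $B$, to split into finitely many graded blocks yields the asserted finite union of such graphs, and Morita invariance transfers the conclusion to $B$.

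For part (2), the standardness hypothesis of Definition \ref{xxdef1.3} should single out $B$ as basic (equivalently, with $B_0$ a finite product of copies of $\K$), so that its indecomposable projectives, and hence its graded simple modules, appear with multiplicity one. By Theorem \ref{xxthm0.1}(2) these graded simples are in canonical bijection, up to degree shift and isomorphism, with the indecomposable graded MCM $A$-modules $M_1,\dots,M_d$, a datum intrinsic to $A$. Since the module $C$ depends only on the MCM theory of $A$ and not on the chosen resolution, any two standard noncommutative quasi-resolutions are each graded Morita equivalent to $C$, hence to one another; a basic algebra being determined up to isomorphism by its graded Morita class, and standardness fixing the compatible grading (the shifts $w_i$), the two are isomorphic as $\mathbb{N}$-graded algebras.

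The hard part will be the verification that $C$ is Artin--Schelter regular of global dimension \emph{exactly} two, rather than merely of finite global dimension. Bounding the global dimension by two is where the two-dimensionality together with the isolated-singularity and Auslander--Gorenstein hypotheses must enter essentially, as they guarantee that MCM approximations terminate after two steps; and pinning down the correct Gorenstein parameter (the degree shift in the Artin--Schelter condition) requires careful bookkeeping of the balanced dualizing complex of $A$ through the passage to the endomorphism ring. These are exactly the ingredients assembled in Theorem \ref{xxthm0.1} and in the notion of a standard quasi-resolution, so I expect them to combine to place $C$ within the scope of the Reyes--Rogalski classification; making the Gorenstein-symmetry bookkeeping precise is the crux of the argument.
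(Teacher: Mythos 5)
There are genuine gaps, and the central one is that your detour through $C:=\End_{A}(\bigoplus M_i(w_i))$ is both unnecessary and unrepaired. The hypothesis of Theorem \ref{xxthm0.2} is that $B$ itself is a standard NQR, and by Definition \ref{xxdef1.9} an NQR is already graded Auslander regular and CM; Lemma \ref{xxlem1.10} then gives at once that $B$ is generalized AS regular, so Theorem \ref{xxthm1.4} (the Reyes--Rogalski classification $B\cong A_2(Q,\tau)$ with arrows of weight $1$ and $\rho(Q)=2$) applies \emph{directly} to $B$ --- no endomorphism ring is needed. By contrast, your route requires proving that $C$ is AS regular of global dimension exactly two via an ``Auslander-type theorem'' and a transport of the balanced dualizing complex, and you explicitly defer both (``the hard part,'' ``the crux''); those deferred steps are the entire content of the claim, so as written part (1) is not proved. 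Worse, your preliminary reduction is itself flawed: you assert the shifts $w_i$ can be chosen so that $\Hom_A(M_i(w_i),M_j(w_j))_0=0$ for $i\neq j$, making $C_0$ semisimple, but Example \ref{xxex3.3} shows this can fail --- e.g.\ $B'=\End_A(A\oplus C(1))$ there has non-semisimple $B'_0$, and in case (2) of that example $A$ admits \emph{no} standard NQR at all. Since the paper's Gabriel quiver (Definition \ref{xxdef2.1}) is only defined for elementary algebras, your Morita-invariance step has nothing to attach to unless this semisimplicity is actually secured. Finally, even granting AS regularity, you quote the ADE conclusion as a black box; the paper still needs the Nakayama-twist property $Q^{\op}={}^{\mu}Q$ from \cite[Theorem 1.2(2)]{RR2} together with Lemma \ref{xxlem2.6} and Proposition \ref{xxpro2.7} to pass from $\rho(Q)=2$ to ``pretzelization of an $\widetilde{A}\widetilde{D}\widetilde{E}$ graph.''

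Part (2) contains a second genuine gap: you rest the uniqueness on the principle that a basic graded algebra is determined up to isomorphism by its graded Morita class, with standardness ``fixing the compatible grading (the shifts $w_i$)'' --- but the first half of this principle is false, and the paper's Example \ref{xxex3.3}(3) is an explicit counterexample: two NQRs $B$, $B'$ of the same $A$ with $B_0=B'_0=\Bbbk^{\oplus 2}$ (hence both basic and graded Morita equivalent) yet $B\not\cong B'$ as graded algebras. Killing the shift parameters is exactly where the work lies, and your parenthetical gives no argument. The paper's Lemma \ref{xxlem3.1} does it as follows: after matching indecomposable projectives $\Psi(P_i)=R_i(w_i)$ with all $w_i\geq 0$ and $w_1=0$, strong connectedness of the quiver $Q$ of the standard algebra $A_2(Q,\tau)$ (\cite[Lemma 7.3]{RR2}) produces an arrow $a\to b$ with $w_a=0<w_b$, and then
$$\Hom_{B_2}(R_b(w_b),R_a)_1=\Hom_{B_2}(R_b(w_b-1),R_a)_0=0,$$
so $\Hom_{B_1}(P_a,P_b)_1=0$, contradicting the existence of that arrow (here generation in degrees $0,1$ --- i.e.\ standardness --- is what makes arrows visible in degree-$1$ Hom spaces). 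This forces all $w_i=0$ and hence $B_1\cong\End_{B_2}(\bigoplus R_i)\cong B_2$. Without some such argument, your proof of (2) proves only graded Morita equivalence, which is already known from \cite[Theorem 0.6(1)]{QWZ} and is strictly weaker than the asserted uniqueness.
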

 
The above theorem confirms that a generalized version of the 
noncommutative McKay correspondence should still be within the 
framework of $\widetilde{A}\widetilde{D}\widetilde{E}$ diagrams. 
Since the Gabriel quiver ${\mathcal G}(B)$ of $B$ is defined by 
using simple modules over $B$, by the correspondence given in 
Theorem \ref{xxthm0.1}(3) and the uniqueness in Theorem 
\ref{xxthm0.2}(2), ${\mathcal G}(B)$ (if it exists) is also an 
invariant of MCM modules over $A$. The proof of Theorem 
\ref{xxthm0.2} follows from Theorem \ref{xxthm0.1} and results of 
Reyes-Rogalski when one relates the results in \cite{RR1, RR2} with
the concept of noncommutative quasi-resolutions.

Terminology used in the above theorems will be explained in 
later sections. The proofs of Theorems \ref{xxthm0.1} and 
\ref{xxthm0.2} will be given in Section 3.

\subsection*{Acknowledgments} 
The authors would like to thank Ken Brown, Daniel Rogalski, Robert 
Won and Quanshui Wu for many useful conversations and valuable 
comments on the subject. X.-S. Qin was partially supported by the 
Foundation of China Scholarship Council (Grant No. [2016]3100). 
Y.-H. Wang was partially supported by the Natural Science Foundation 
of China (No. 11871071) and Foundation of China Scholarship Council 
(Grant No. [2016]3009).
%Y.-H. Wang was 
%partially supported by the Foundation of China Scholarship 
%Council (Grant No. [2016]3009), the Foundation of Shanghai 
%Science and Technology Committee (Grant No. 15511107300), the 
%Natural Science Foundation of China (No. 11871071). 
Y.H. Wang and X.-S. Qin thank the Department of Mathematics, University 
of Washington for its very supportive hospitality during their 
visits. J.J. Zhang was partially supported by the US National 
Science Foundation (Grant No. DMS-1700825). 

\section{Definitions and preliminaries}
\label{xxxsec1}

Throughout let $\Bbbk$ be a field. All algebras and modules are 
over $\Bbbk$. Recall that a $\Bbbk$-algebra $A$ is {\it $\N$-graded}
if $A=\bigoplus_{n\in\N}A_n$ as vector spaces with $1\in A_0$ and 
$A_iA_j\subseteq A_{i+j}$ for all $i,j\in\N$. We say that $A$ is 
{\it locally finite} if $\dim_{\Bbbk}A_n<\infty$ for all $n$. In 
this paper, a graded algebra usually means $\N$-graded.
A right $A$-module $M$ is $\Z$-graded if $M=\bigoplus_{n\in\Z}M_n$ 
with $M_iA_j\subseteq M_{i+j}$ for all $i,j\in {\mathbb Z}$. We 
write $\GrMod A$ for the category of right graded $A$-modules with 
morphisms being the degree preserving homomorphisms, and $\grmod A$ 
for the subcategory of finitely generated right $A$-modules. Other
definitions such as degree shift or grading shift $(w)$ can be 
found in \cite{RR1, RR2}.

\subsection{Generalized Artin-Schelter regular algebras}
\label{xxsec1.1}

In this subsection, we review the definition of a generalized 
Artin-Schelter (AS) regular algebra.

\begin{definition}\cite[Definition 1.4]{RR1}
\label{xxdef1.1}
Let $A$ be a locally finite graded algebra and $J:=J(A)$ be the 
graded Jacobson radical, that is, the intersection of all graded 
maximal right ideals of $A$. Write $S=A/J$. We say that $A$ is 
{\it generalized AS Gorenstein of dimension $d$} if $A$ has 
graded injective dimension $d$ and there is a graded invertible 
$(S,S)$-bimodule $V$ such that
\begin{equation}
\label{E1.1.1}\tag{E1.1.1}
\Ext_A^i(S,A)\cong \left\{
\begin{array}{ll}
 V, & \mbox{if $i=d$},\\
 0, & \mbox{if $i\neq d$},
\end{array} \right.
\end{equation} 
as $(S,S)$-bimodules.
If further $A$ has graded global dimension $d$, then $A$ is called
{\it generalized AS regular of dimension $d$}.
\end{definition}

%Reyes-Rogalski \cite{RR1} gave the relation between twisted 
%Calabi-Yau property and generalized AS regular property of a 
%locally finite graded algebra.
%
%\begin{theorem}\cite[Theorem 1.5]{RR1}
%\label{xxthm1.4}
%Let $A$ be a locally finite graded algebra with $S=A/J(A)$. 
%Then the following are equivalent:
%\begin{enumerate}
%\item[(a)] 
%$A$ is twisted Calabi-Yau of dimension $d$,
%\item[(b)] 
%$A$ is generalized AS regular of dimension $d$, and 
%$S$ is separable over $\Bbbk$.
%\end{enumerate}
%\end{theorem}
%
%If $A$ is elementary, in which case $S$ is automatically separable 
%over $\Bbbk$, then $A$ is twisted Calabi-Yau of dimension $d$ if 
%and only if $A$ is generalized AS regular of dimension $d$.

\begin{definition}\cite[Section 3]{RR2}
\label{xxdef1.2}
Let $A$ be a locally finite $\N$-graded $\Bbbk$-algebra. If the 
finite-dimensional semisimple algebra $S:=A/J(A)\cong A_0/J(A_0)$ 
is isomorphic to a product $\Bbbk^{\oplus d}$ of finitely many copies of 
the base field $\Bbbk$, then we say that $A$ is {\it elementary}.
\end{definition}

\begin{definition}
\label{xxdef1.3}
Let $A$ be a graded algebra. We say $A$ is {\it standard}
if $A_0$ is $\Bbbk^{\oplus d}$ for some positive integer $d$ 
and $A$ is generated by $A_0$ and $A_1$. 
\end{definition}

A very nice result proven by Reyes-Rogalski is the following.

\begin{theorem} \cite{RR2}
\label{xxthm1.4}
Let $A$ be an ${\mathbb N}$-graded generalized AS 
regular algebra. Suppose that
\begin{enumerate}
\item[(a)]
$A$ is standard,
\item[(b)]
$A$ is noetherian,
\item[(c)]
$\gldim A=2$.
\end{enumerate}
Then $A$ is isomorphic to the algebra $A_2(Q,\tau)$ 
described in \cite[Definition 7.5]{RR2} where $Q$
is a quiver whose arrows all have weight 1 and whose 
spectral radius $\rho(Q)$ is 2.
\end{theorem}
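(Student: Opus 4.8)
The plan is to convert all three hypotheses into the combinatorics of a quiver and then read off the classification from the minimal resolution of the semisimple quotient together with the Gorenstein symmetry. First I would use standardness (Definition~\ref{xxdef1.3}): since $A_0\cong\Bbbk^{\oplus d}$ has primitive idempotents $e_1,\dots,e_d$ and $A$ is generated by $A_0$ and $A_1$, one has $S:=A/J=A_0$ and $A\cong\Bbbk Q/I$, where $Q$ is the quiver with vertices $1,\dots,d$, with the arrows from $i$ to $j$ forming a basis of $e_iA_1e_j$, and $I\subseteq\Bbbk Q_{\ge2}$. Because every generator lies in $A_1$, every arrow has weight~$1$, which already gives the first assertion about $Q$; it remains to determine the relations and to compute $\rho(Q)$.

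Next I would use $\gldim A=2$ to form the minimal graded projective resolution of $S$ as a right module,
\begin{equation*}
0\to P_2\xrightarrow{\,d_2\,}P_1\xrightarrow{\,d_1\,}P_0\to S\to 0,
\end{equation*}
where minimality forces $P_0=\bigoplus_i e_iA$ (in degree~$0$) and $P_1=\bigoplus_a e_{h(a)}A(-1)$, with one rank-one summand for each arrow $a$ of head $h(a)$ (in degree~$1$). The decisive structural input is the generalized AS Gorenstein condition~\eqref{E1.1.1}: applying $\Hom_A(-,A)$ to the resolution and using $\Ext^i_A(S,A)=0$ for $i\neq2$ together with the invertibility of $V=\Ext^2_A(S,A)$, one finds that the dualized complex is again a minimal projective resolution, now of a simple left module. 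Since an invertible $(S,S)$-bimodule over $S=\Bbbk^{\oplus d}$ is isomorphic to $S$ with its right action twisted by a permutation $\nu$ of the $d$ factors, uniqueness of minimal resolutions forces $P_2=\bigoplus_i e_{\nu(i)}A(-2)$. In particular there are exactly $d$ relations, all homogeneous of degree~$2$, so $A$ is quadratic and the entries of $d_2$ are precisely its defining relations.

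I would then read the matrix-valued Hilbert series $H(t)=\big(\sum_n\dim_{\Bbbk}(e_iA_ne_j)\,t^n\big)_{i,j}$ off the resolution. Taking graded dimensions along the exact sequence yields the identity
\begin{equation*}
H(t)^{-1}=I-C\,t+P_\nu\,t^2,
\end{equation*}
where $C$ is the weight-one adjacency matrix of $Q$ and $P_\nu$ is the permutation matrix of $\nu$. The growth of $A$ is governed by the least-modulus root of $\det(I-Ct+P_\nu t^2)$, which by Perron--Frobenius applied to the nonnegative matrix $C$ is pinned to the Perron root $\rho(C)=\rho(Q)$: on the Perron direction (fixed by $\nu$, by the duality constraint) the denominator behaves like $1-\rho(Q)t+t^2$, whose least positive root equals $1$ exactly when $\rho(Q)=2$. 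Regularity (a length-two resolution with $A$ infinite-dimensional) excludes $\rho(Q)<2$, in which case the denominator has no positive real root and the growth is at most linear; noetherianity~(b) excludes $\rho(Q)>2$, in which case the least root lies strictly inside the unit disc, $A$ grows exponentially and contains a free subalgebra. Hence $\rho(Q)=2$. Finally I would match data: the permutation $\nu$ and the explicit quadratic entries of $d_2$ assemble into the twist datum, and comparing them with \cite[Definition~7.5]{RR2} identifies $A$ with $A_2(Q,\tau)$.

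The main obstacle, I expect, is this last identification together with making the growth dichotomy exact. Recognizing the abstract relations in the image of $d_2$ as the specific twisted preprojective-type relations defining $A_2(Q,\tau)$ is a genuine bookkeeping problem: one must check that the Gorenstein symmetry produces exactly the pairing of arrows recorded by $\tau$, not merely relations of the correct number and degree. Equally delicate is showing that the spectral radius is exactly $2$ rather than at most $2$, especially when $C$ is reducible, since then one needs a component-by-component Perron--Frobenius analysis to guarantee that every connected component of $Q$ contributes a double pole at $t=1$, so that neither a subquadratic component (forbidden by regularity) nor an exponential one (forbidden by noetherianity) can occur.
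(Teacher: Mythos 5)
The paper does not reprove this result: its ``proof'' is a three-citation assembly (generalized AS regular $\Rightarrow$ twisted Calabi--Yau by \cite[Theorem 1.5]{RR1}; elementary twisted CY of global dimension $2$ $\Rightarrow$ $A\cong A_2(Q,\tau)$ by \cite[p.37]{RR2}; standardness $\Rightarrow$ weight-$1$ arrows; noetherian $\Rightarrow$ $\rho(Q)=2$ by \cite[Theorem 7.8(2)]{RR2}). You instead attempt to reconstruct the Reyes--Rogalski classification from scratch, and while the skeleton (minimal resolution of $S$, dualizing via \eqref{E1.1.1}, the matrix Hilbert series identity $H(t)^{-1}=I-Ct+P_\nu t^2$, Perron--Frobenius on $C$) is the right skeleton, the reconstruction has genuine gaps precisely at the points where the paper leans on the cited theorems. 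The most serious is the exclusion of $\rho(Q)>2$: containing a free subalgebra does \emph{not} contradict noetherianity --- plenty of noetherian algebras (division rings, localized Weyl algebras) contain free subalgebras on two generators --- so your stated reason fails. What is actually needed is the Stephenson--Zhang theorem that noetherian locally finite ${\mathbb N}$-graded algebras have subexponential growth; this is the substantive input behind \cite[Theorem 7.8(2)]{RR2}, and asserting ``exponential growth contradicts (b)'' without it is a real hole, not a bookkeeping omission.

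The exclusion of $\rho(Q)<2$ is likewise not completed: ``growth at most linear'' is not by itself incompatible with hypotheses (a)--(c), since noetherian generalized AS regular algebras of small GK-dimension are not obviously absurd; to get a contradiction you must either show that the coefficient matrices of $(I-Ct+P_\nu t^2)^{-1}$ fail nonnegativity, or that $A$ would be finite dimensional and that finite dimensionality is incompatible with the Gorenstein condition \eqref{E1.1.1} in dimension $2$ (a finite-dimensional algebra of finite global dimension satisfying it would be graded self-injective, hence semisimple). You concede the reducible-$C$ case yourself, but even in the irreducible case no contradiction is actually derived. Finally, the degree pinning $P_2=\bigoplus_i e_{\nu(i)}A(-2)$ does not follow from ``uniqueness of minimal resolutions'' alone: the graded invertible $(S,S)$-bimodule $V$ can a priori carry \emph{different} degree shifts on its $d$ one-dimensional components, and ruling this out requires left-standardness (so that $A_{\geq 1}e_j$ is generated in degree $1$) together with the observation that invertibility of $V$ forces every simple to have projective dimension exactly $2$; and the identification of the resulting quadratic relations with the $\tau$-twisted relations of $A_2(Q,\tau)$, which you flag as the ``main obstacle,'' is exactly the content of the classification in \cite{RR2} that the paper quotes. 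So the proposal stalls at the same steps the paper outsources, and as written it does not constitute a complete proof.
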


\begin{proof} We combine some results of Reyes-Rogalski.
By \cite[Theorem 1.5]{RR1}, $A$ is twisted 
Calabi-Yau in the sense of \cite[Definition 1.2]{RR1}.
By \cite[p.37]{RR2}, every locally finite elementary 
graded twisted Calabi-Yau algebra of global dimension 2 
is isomorphic to $A_2(Q,\tau)$. Since $A$ is standard, 
the weight of every arrow in  $Q$ is 1. Since $A$ is 
noetherian, by \cite[Theorem 7.8(2)]{RR2}, $\rho(Q)=2$.
\end{proof}

\subsection{Noncommutative quasi-resolutions}
\label{xxsec1.2}
In this subsection we review some definitions about 
noncommutative quasi-resolutions from \cite{QWZ}. We assume
that all algebras are noetherian in this subsection.
First we recall the definition of Gelfand-Kirillov dimension.

\begin{definition}\cite[Definition 2.1]{KL}
\label{xxdef1.5}
Let $A$ be an algebra and  $M$ a right $A$-module.
\begin{enumerate}
\item[(1)] The {\it Gelfand-Kirillov dimension} (or {\it $\GKdim$})
 of $A$ is defined to be
$$
\GKdim A=\sup_V\{\varlimsup_{n\rightarrow\infty}
\log_n(\dim V^n)\mid V\subseteq A\}
$$
where $V$ ranges over all finite dimensional $\Bbbk$-subspaces of $A$.
\item[(2)] The {\it Gelfand-Kirillov dimension} (or {\it $\GKdim$}) 
of $M$ is defined to be
$$
\GKdim M=\sup_{V,W}\{\varlimsup_{n\rightarrow\infty}\log_n(\dim WV^n)\mid
V\subseteq A,W\subseteq M\}
$$
where $V$ and $W$ range over all finite dimensional $\Bbbk$-subspaces 
of $A$ and $M$ respectively.
\end{enumerate}
\end{definition}

If $M$ is a finitely generated graded module over a locally finite
graded algebra $A$, then its $\GKdim$ can be computed by \cite[(E7)]{Zh}
$$\GKdim M=\varlimsup_{n\rightarrow\infty}\log_n\sum\limits_{j\leq n}\dim(M_j).$$
By \cite[Theorem 6.14]{KL}, $\GKdim$ is exact for modules over a 
locally finite graded algebra.

For simplicity, we always work with the dimension function $\GKdim$ in 
this paper. Next we specialize some definitions in \cite{QWZ} to the 
$\GKdim$ case and omit the prefix ``$\GKdim$'' in some cases.

\begin{definition}\cite[Definition 1.5]{QWZ}
\label{xxdef1.6}
Let $n\geq 0$. Let $A$ and $B$ be two locally finite graded algebras.
\begin{enumerate}
\item[(1)]
Two $\Z$-graded right $A$-modules $X,Y$ are called 
{\it $n$-isomorphic}, denoted by $X\cong_n Y$, if there
exist a $\Z$-graded right $A$-module $P$ and homogeneous 
homomorphisms of degree zero $f: X\to P$ and $g: Y\to P$ such 
that both the kernel and cokernel of $f$ and $g$ are in 
$\GrMod_n A$.
\item[(2)]
Two $\Z$-graded $(B,A)$-bimodules $X,Y$ are called
{\it $n$-isomorphic}, denoted by $X\cong_n Y$, if there
exist a $\Z$-graded $(B,A)$-bimodule $P$ and homogeneous 
bimodule homomorphisms with degree zero $f: X\to P$
and $g: Y\to P$ such that both the kernel and cokernel of
$f$ and $g$ are in $\GrMod_n A$ when viewed as graded right 
$A$-modules.
\end{enumerate}
\end{definition}

\begin{definition}\cite[Definitions 1.2 and 2.1]{Le}
\label{xxdef1.7}
Let $A$ be an algebra and $M$ a right $A$-module.
\begin{enumerate}
\item[(1)]
The {\it grade number} of $M$ is defined to be
$$j_A(M):=\inf\{i|\Ext_A^i(M,A)\neq0\}\in \N\cup\{+\infty\}.$$ 
If no confusion can arise, we write $j(M)$ for $j_A(M)$. Note that 
$j_A(0)=+\infty$.
\item[(2)] 
We say $M$ satisfies the {\it Auslander condition} if for any $q\geq0,$
$j_A(N)\geq q$ for all left $A$-submodules $N$ of $\Ext_A^q(M,A)$.
\item[(3)] 
We say $A$ is {\it Auslander-Gorenstein} (respectively, 
{\it Auslander regular}) of dimension $n$ if 
$\injdim A_A=\injdim {_AA}=n<\infty$ (respectively, $\gldim A=n<\infty$) 
and every finitely generated left and right $A$-module satisfies 
the Auslander condition.
\end{enumerate}
A graded version of an Auslander-Gorenstein (respectively, Auslander 
regular) algebra is defined similarly.
\end{definition}

\begin{definition} \cite[Definition 0.4]{ASZ}
\label{xxdef1.8}
Let $A$ be a locally finite graded algebra. We say $A$ is {\it graded 
Cohen-Macaulay} (or, {\it graded CM} in short) if $\GKdim(A)=d\in\N,$ 
and
$$j(M)+\GKdim(M)=\GKdim(A)$$
for every graded finitely generated nonzero left (or right) $A$-module $M$.
\end{definition}

Let $A$ be a locally finite $\N$-graded  algebra and $n$ a nonnegative 
integer. Let $\GrMod_n A$ denote the full subcategory of $\GrMod A$
consisting of $\Z$-graded right $A$-modules $M$ with $\GKdim(M)\leq n$.
Since $\GKdim$ is exact over a noetherian locally finite $\N$-graded 
algebra \cite[Theorem 6.14]{KL}, $\GrMod_n A$ is a Serre subcategory
of $\GrMod A$. Hence it makes sense to define the quotient categories
$$\QGr_nA:=\frac{\GrMod A}{\GrMod_nA} \quad \text{and} \quad
\qgr_nA:=\frac{\grmod A}{\grmod_nA}.$$
We denote the natural and exact projection functor by
\begin{equation}
\label{E1.8.1}\tag{E1.8.1}
\pi:\GrMod A\longrightarrow\QGr_n A.
\end{equation}
For $M\in\GrMod A,$ we write $\mathcal{M}$ for the object $\pi(M)$
in $\QGr_n A$. The hom-sets in the quotient category are defined by
\begin{equation}
\label{E1.8.2}\tag{E1.8.2}
\Hom_{\QGr_n A}(\mathcal{M},\mathcal{N})=
\lim_{\longrightarrow}\Hom_A(M',N')
\end{equation}
for $M$, $N$ $\in\GrMod A$, where $M'$ is a graded submodule of $M$ 
such that the $\GKdim$ of $M/M'$ is no more than $n$, $N'=N/T$ for 
some graded submodule $T\subseteq N$ with $\GKdim(T)\leq n$, and 
where the direct limit runs over all the pairs $(M',N')$ with these 
properties. Note that $\pi$ is also defined from 
\begin{equation}
\label{E1.8.3}\tag{E1.8.3}
\grmod A\longrightarrow\qgr_n A.
\end{equation} 
It follows from \cite[(E1.10.1)]{QWZ} that the functor $\pi$ 
in \eqref{E1.8.1} has a right adjoint functor
\begin{equation}
\label{E1.8.4}\tag{E1.8.4}
\omega: \QGr_n A\longrightarrow\GrMod A.
\end{equation} 
By \cite[Proposition 2.10(2)]{QWZ}, when $M$ is $(n+2)$-pure 
in the sense of Definition \ref{xxdef2.8}(6) (in the next 
section) over an Auslander Gorenstein and CM algebra $A$, 
then $\omega(\pi(M))$ agrees with the Gabber closure 
defined in \cite[Definition 2.8]{QWZ}.

% are you sure this is right? Maybe there are two ways to state this?:
%1. when $M$ is n-pure in the sense of [QWZ, Definition 2.1(2)]
%2. when $M$ is n-pure in the sense of Definition 2.8(6) over an 
%AG CM algebra $A$ and when $n=\GKdim A-\GKdim M

We will use $\qgr_0 A$ in section 2, 
which will be denoted by $\qgr A$.

Let $\mathcal{A}$ be a category consisting of ($\N$-)graded, 
locally finite, noetherian algebras with finite $\GKdim$ 
\cite[Example 3.1]{QWZ}. Our definition of a noncommutative 
quasi-resolution will be made inside the category $\mathcal{A}$.

\begin{definition}\cite[Definition 0.5]{QWZ}
\label{xxdef1.9}
Let $A\in\mathcal{A}$ with $\GKdim(A)=d\geq 2$. 
If there are a graded Auslander-regular and $\CM$ 
algebra $B\in\mathcal{A}$ with $\GKdim(B)=d$ and two 
${\mathbb Z}$-graded bimodules $_{B}M_{A}$ and $_{A}N_{B}$, 
finitely generated on both sides, such that
$$M\otimes_{A} N\cong_{d-2} B, \quad {\text{and}}\quad
N\otimes_{B} M\cong_{d-2} A$$
as ${\mathbb Z}$-graded bimodules, then the triple $(B,M,N)$ or
simply the algebra $B$ is called a {\it noncommutative
quasi-resolution} (or {\it NQR} for short) of $A$.
\end{definition}

If $A\in\mathcal{A}$ with $\GKdim A=2$, then by 
\cite[Theorem 4.2 and Lemma 8.2]{QWZ},
any two NQRs of $A$ are graded Morita equivalent, namely, 
there is a unique noncommutative quasi-resolution of $A$ in 
the sense of Morita equivalent.

%Then we say $A$ is ... or $A$ has a graded isolated singularity if ...
%Because in Theorem 0.1(4), there said "$A$ has a graded isolated singularity".
%add a remark: A NQR of a notherian locally finite graded 
%algebra is a graded isolated singularity? 
%To be used in the proof of Theorem 0.1(4).

\begin{lemma}
\label{xxlem1.10}
If $A$ is noetherian, graded, locally finite, Auslander 
regular and CM, then $A$ is generalized AS regular.
\end{lemma}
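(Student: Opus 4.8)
The plan is to verify the two requirements of Definition \ref{xxdef1.1} directly: that $A$ has graded injective dimension $d := \GKdim A$ on both sides, and that $\Ext_A^i(S,A)$ vanishes for $i \neq d$ while $\Ext_A^d(S,A)$ is an invertible graded $(S,S)$-bimodule. Throughout, write $J = J(A)$ and $S = A/J$, which by local finiteness is a finite-dimensional semisimple algebra, so that $\GKdim S = 0$.

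First I would pin down the grade of $S$. Since $A$ is CM and $S$ is a nonzero finitely generated module with $\GKdim S = 0$, Definition \ref{xxdef1.8} gives $j(S) = \GKdim A - \GKdim S = d$; in particular $\Ext_A^i(S,A) = 0$ for all $i < d$. For the vanishing above $d$, I would feed each nonzero $\Ext_A^i(S,A)$ (a finitely generated graded left module, as $A$ is noetherian) back into the CM and Auslander machinery: the Auslander condition of Definition \ref{xxdef1.7}(2), applied with $N = \Ext_A^i(S,A)$ and $q = i$, forces $j(\Ext_A^i(S,A)) \geq i$, while the CM equality turns this into $\GKdim \Ext_A^i(S,A) = d - j(\Ext_A^i(S,A)) \leq d - i$. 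For $i > d$ the right-hand side is negative, which is impossible for a nonzero module, so $\Ext_A^i(S,A) = 0$. Hence $\Ext_A^i(S,A)$ vanishes except possibly at $i = d$.

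Next I would identify $d$ with the global dimension and then read off the injective dimension. Take a minimal graded free resolution $P_\bullet \to S$ of right $A$-modules; since $A$ is noetherian and locally finite this exists with finitely generated terms and with differentials whose matrix entries lie in $J$. Because the graded simple modules are exactly the summands of $S$, the finite number $\gldim A$ equals the supremum of their projective dimensions, which is $\mathrm{pd}_A(S)$, the length $n$ of $P_\bullet$. Applying $\Hom_A(-,A)$, minimality places the image of each transposed differential inside $J\cdot \Hom_A(P_i,A)$, so graded Nakayama shows that the cokernel at the top spot is nonzero, i.e. $\Ext_A^n(S,A) \neq 0$. Comparing with the previous paragraph gives $n = d$, so $\gldim A = d$. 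Since $\gldim A = d < \infty$ we get $\injdim A_A \leq d$, and $\Ext_A^d(S,A) \neq 0$ gives $\injdim A_A \geq d$; the hypotheses being left-right symmetric, the same holds for ${}_AA$, so $\injdim A_A = \injdim {}_AA = d$.

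It then remains to prove that $V := \Ext_A^d(S,A)$ is an invertible graded $(S,S)$-bimodule, and this is the step I expect to be the main obstacle. The natural $(S,S)$-bimodule structure on $V$ comes from the fact that $S$ is annihilated by $J$ on both sides. Because all lower and higher Ext groups vanish, the dualized complex $\Hom_A(P_\bullet,A)$, suitably reindexed, is itself a minimal graded free resolution of the left module $V$ of length $d$, with Betti numbers transposed from those of $P_\bullet$. I would exploit this self-duality, together with the double-dual computation recovering $S$, to match the multiplicities of the simple summands of $V/JV$ with those of $S$ and conclude that $V$ is a rank-one projective over the semisimple algebra $S$ on each side. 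This is equivalent to $V$ being an invertible graded $(S,S)$-bimodule, which supplies the isomorphism \eqref{E1.1.1} and completes the verification that $A$ is generalized AS regular of dimension $d$.
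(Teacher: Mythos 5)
Your first three paragraphs are correct, and they are in fact more explicit than the paper on the dimension bookkeeping: the CM equality gives $j(S)=d$, the Auslander condition plus CM kills $\Ext^i_A(S,A)$ for $i>d$ (a nonzero value would have grade $\geq i$, forcing negative $\GKdim$), and the minimal-resolution/graded-Nakayama argument identifying $\gldim A=\injdim A_A=\injdim {}_AA=d$ is sound. The genuine gap is exactly where you predicted it: the final paragraph is not a proof, and the plan it sketches would fail as stated. Matching ``multiplicities of the simple summands of $V/JV$'' with those of $S$ only controls the \emph{top} of $V$: in the dualized resolution $0\to \Hom_A(P_0,A)\to\cdots\to\Hom_A(P_d,A)\to V\to 0$ the projective cover of $V$ is $\Hom_A(P_d,A)$, so $V/JV$ is governed by the $d$-th Betti numbers of $S$, about which you know nothing a priori. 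More seriously, nothing in the sketch shows $JV=0$ --- i.e., that $V$ is an $(S,S)$-bimodule at all, rather than merely a left $A$-module killed by $J$ on the right --- nor that each simple occurs in $V$ exactly once rather than merely once in its top; a finite-dimensional module whose top matches the pattern of $S$ can be far from semisimple.

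The missing idea, which is how the paper closes this step, is to upgrade your vanishing statement from $S$ to \emph{all} finite-dimensional graded modules $M$ (your identical Auslander-plus-CM argument applies, since $\GKdim M=0$). Then $\Ext^d_A(-,A)$ is an exact contravariant functor on finite-dimensional graded right modules, and the double-Ext spectral sequence \cite[(E2.13.1)]{QWZ} shows it is a duality with inverse $\Ext^d_{A^{\op}}(-,A)$; in particular it carries graded simple right modules bijectively to graded simple left modules, which forces each $\Ext^d_A(S_i,A)$ to be simple, hence $V$ semisimple with each simple occurring exactly once. (To see simplicity, apply exactness to $0\to W\to V_i\to V_i/W\to 0$ and dualize back to split $S_i$.) The passage from this bijection on simples to generalized AS regularity is then precisely \cite[Theorem 5.2]{RR1}, which the paper cites rather than reproves. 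One further caveat about your closing criterion: the lemma does not assume $A$ elementary, so $S$ is a product of matrix algebras over division rings, where ``rank-one projective on each side'' is not a well-behaved notion and multiplicity counting is not left-right symmetric; the duality formulation avoids this. So your route is salvageable, but its decisive step --- semisimplicity of $V$ and the bijection on simples via the double-Ext duality --- is absent from the proposal.
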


\begin{proof} By the Auslander and CM properties, for every 
finite dimensional graded right $A$-module $M$, 
$$\Ext^i_A(M,A)=\begin{cases} 0 & i\neq d:=\gldim A,\\
N&i=d
\end{cases}
$$
for some finite dimensional graded left $A$-module $N$. 
By the double-Ext spectral sequence \cite[(E2.13.1)]{QWZ},
$\Ext^d_A(-,A)$ induces a bijection from the set of graded 
simple right $A$-modules up to isomorphism to the set of 
graded simple left $A$-modules up to isomorphism. By
\cite[Theorem 5.2]{RR1}, $A$ is generalized AS regular.
\end{proof}

\subsection{Other concepts}
\label{xxsec1.3}
We recall some other concepts that are used in the main theorems.
The following definition is due to Ueyama.

\begin{definition}\cite[Definition 2.2]{Ue}
\label{xxdef1.11}
Let $A$ be a noetherian graded algebra. We say $A$ is a 
{\it graded isolated singularity} if the associated noncommutative 
projective scheme $\QGr A$ has finite global dimension. 
\end{definition}

Ueyama gave this definition for connected graded algebras, but 
we consider possibly non-connected graded algebras. This concept 
is used in Theorem \ref{xxthm0.1}(4).

We will also use some results about balanced dualizing complex over 
noncommutative rings introduced by Yekutieli \cite{Ye}. We refer to 
\cite{Ye, VdB3, CWZ} for more details. We need the following local
duality formula. 

\begin{theorem} \cite{Ye, VdB3}
\label{xxthm1.12} 
Let $A$ be a noetherian, graded, locally finite algebra with 
balanced dualizing complex
$R$ and let $M$ be a graded right $A$-module. Then 
$$R\Gamma_{\mathfrak m}(M)^{\ast}={\mathrm{RHom}}_A(M,R)$$
where $\mathfrak m$ is the graded Jacobson ideal of $A$ and 
$(-)^{\ast}$ denotes the graded $\Bbbk$-linear dual.
\end{theorem}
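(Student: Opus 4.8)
The plan is to derive the formula from Van den Bergh's characterization of the balanced dualizing complex together with the tensor--hom adjunction over the ground field $\Bbbk$. Recall that when $A$ admits a balanced dualizing complex it satisfies the $\chi$-condition and the torsion functor $\Gamma_{\fm}$ has finite cohomological dimension, and in this situation the balanced dualizing complex is given explicitly by $R\cong R\Gamma_{\fm}(A)^{\ast}$, an isomorphism of complexes of graded $(A,A)$-bimodules. I would take this identification as the starting point, so that for a graded right $A$-module $M$ the right-hand side becomes $\mathrm{RHom}_A(M,R)\cong \mathrm{RHom}_A\bigl(M,\,R\Gamma_{\fm}(A)^{\ast}\bigr)$.

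Next I would apply the derived $\Bbbk$-linear adjunction. For a complex of bimodules $N$, the elementary identity $\Hom_A\bigl(M,\Hom_{\Bbbk}(N,\Bbbk)\bigr)\cong \Hom_{\Bbbk}\bigl(M\otimes_A N,\Bbbk\bigr)$ --- valid because a right $A$-linear map into the $\Bbbk$-dual of a left $A$-module is the same datum as a $\Bbbk$-linear functional on the balanced tensor product --- passes to the derived category as $\mathrm{RHom}_A(M,N^{\ast})\cong (M\otimes^{\mathbb L}_A N)^{\ast}$, since $\Hom_{\Bbbk}(-,\Bbbk)$ is exact. Taking $N=R\Gamma_{\fm}(A)$ yields $\mathrm{RHom}_A(M,R)\cong \bigl(M\otimes^{\mathbb L}_A R\Gamma_{\fm}(A)\bigr)^{\ast}$. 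Here I must keep track of sides: $M$ is a right module, $R\Gamma_{\fm}(A)$ is tensored through its left structure, and the outcome carries a left $A$-action matching that of both $\mathrm{RHom}_A(M,R)$ and $R\Gamma_{\fm}(M)^{\ast}$.

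The crux, and the step I expect to be the main obstacle, is the natural isomorphism $M\otimes^{\mathbb L}_A R\Gamma_{\fm}(A)\cong R\Gamma_{\fm}(M)$, that is, the assertion that derived $\fm$-torsion is computed by tensoring with the single bimodule complex $R\Gamma_{\fm}(A)$. I would construct the natural comparison morphism $M\otimes^{\mathbb L}_A R\Gamma_{\fm}(A)\to R\Gamma_{\fm}(M)$ from the functoriality of $\Gamma_{\fm}$ together with the $A$-action on $M$, verify that it is an isomorphism for $M=A$ (where both sides equal $R\Gamma_{\fm}(A)$) and hence for all free modules, and then extend to arbitrary $M$ by a way-out argument: both functors commute with direct limits and, because $\Gamma_{\fm}$ has finite cohomological dimension, have bounded amplitude, so agreement on frees propagates across the derived category. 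This is precisely where the finiteness hypotheses underlying the existence of a balanced dualizing complex are indispensable. Splicing the three isomorphisms together gives $\mathrm{RHom}_A(M,R)\cong R\Gamma_{\fm}(M)^{\ast}$, which is the stated local duality.
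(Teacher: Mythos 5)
The paper offers no proof of this theorem: it is quoted with the citation \cite{Ye, VdB3}, so the only meaningful comparison is with the argument in those sources, and your proposal is essentially a correct reconstruction of Van den Bergh's proof. Your step (1) is \cite[Theorem 6.3]{VdB3} (existence of a balanced dualizing complex forces the $\chi$-condition, finite cohomological dimension of $\Gamma_{\fm}$, and the identification $R\cong R\Gamma_{\fm}(A)^{\ast}$ as bimodule complexes); step (2) is the graded-dual tensor--hom adjunction, which indeed derives painlessly because the graded dual is exact (resolve $M$ by a K-projective, hence K-flat, complex $P$ and use $\Hom_A(P,N^{\ast})\cong(P\otimes_A N)^{\ast}$ degreewise); and step (3), the isomorphism $M\otimes^{\mathbb{L}}_A R\Gamma_{\fm}(A)\cong R\Gamma_{\fm}(M)$, is exactly the technical heart of \cite[Theorem 5.1]{VdB3}, proved along the lines you sketch: the comparison map exists because right-torsion elements of $A$ remain right-torsion after multiplying by elements of $M$, it is an isomorphism on free modules since $R\Gamma_{\fm}$ commutes with direct sums over a noetherian ring, and it propagates to all modules.

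Two refinements you should make. First, your phrase ``both functors have bounded amplitude'' is not literally correct for the tensor side: a priori $H^i(M\otimes^{\mathbb{L}}_A R\Gamma_{\fm}(A))$ receives contributions from $\mathrm{Tor}^A_j(M,R^q\Gamma_{\fm}(A))$ with $j$ unbounded, so the tensor functor is not visibly bounded on modules. What finite cohomological dimension actually buys is that $R\Gamma_{\fm}(A)$ may be replaced by a bounded complex of bimodules, so that for a free resolution $P_{\bullet}\to M$ both hypercohomology spectral sequences with $E_1^{p,q}=R^q\Gamma_{\fm}(P_p)$ (one abutting to $H^{p+q}(M\otimes^{\mathbb{L}}_A R\Gamma_{\fm}(A))$, the other to $R^{p+q}\Gamma_{\fm}(M)$) have finitely many nonzero rows and converge; the comparison map identifies their $E_1$-pages, which gives the isomorphism --- the exotic negative-degree Tor contributions then vanish a posteriori rather than by an amplitude bound. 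Second, the paper's setting is locally finite $\N$-graded but not necessarily connected, with $\fm$ the graded Jacobson radical, whereas \cite{VdB3} is written for connected graded algebras; the extension to this setting is in the literature (see \cite{CWZ}, which the paper cites for exactly this purpose) and is harmless, but since you invoke the connected-graded statements verbatim, the transfer deserves an explicit citation.
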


The following corollary is well-known.

\begin{corollary}
\label{xxcor1.13} 
Let $A$ be a noetherian, graded, locally finite algebra with 
balanced dualizing complex $R$. If $A$ is generalized AS Gorenstein
of injective dimension $d$, then $R$ is of the form $\Omega(d)$ 
where $\Omega$ is a graded invertible $A$-bimodule. 
\end{corollary}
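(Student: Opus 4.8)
The plan is to feed the generalized AS Gorenstein hypothesis into the local duality formula of Theorem~\ref{xxthm1.12}, the bridge being an analysis of the minimal graded injective resolution of $A$. Applying Theorem~\ref{xxthm1.12} with $M=A$ and using $\mathrm{RHom}_A(A,R)=R$, I first obtain $R\cong R\Gamma_{\fm}(A)^{\ast}$ in the derived category. Thus the whole problem is reduced to computing the local cohomology modules $\H^i_{\fm}(A)=\H^i(R\Gamma_{\fm}(A))$ and showing that they are concentrated in a single cohomological degree.

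Next I would choose a minimal graded injective resolution $0\to A\to I^0\to I^1\to\cdots\to I^d\to 0$ of $A_A$, of length $d$ since $\injdim A_A=d$. Then $R\Gamma_{\fm}(A)$ is computed by the torsion subcomplex $\Gamma_{\fm}(I^{\bullet})$, and each $\Gamma_{\fm}(I^i)$ is an injective $\fm$-torsion module, hence a direct sum of shifts of the injective hulls $E(S_j)$ of the graded simple modules $S_j$. The multiplicity of each $E(S_j)$ is the corresponding graded Bass number, which in this setting is read off from $\Ext^i_A(S_j,A)$. Since $A$ is generalized AS Gorenstein, $\Ext^i_A(S,A)=0$ for every graded simple $S$ and every $i\neq d$, so $\Gamma_{\fm}(I^i)=0$ for $i\neq d$. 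Hence $R\Gamma_{\fm}(A)\cong \H^d_{\fm}(A)[-d]$ is concentrated in degree $d$, and taking graded $\Bbbk$-duals yields $R\cong\Omega(d)$, where $\Omega:=\H^d_{\fm}(A)^{\ast}$ is a graded $A$-bimodule.

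It then remains to prove that $\Omega$ is invertible. For this I would use that $R$ is a dualizing complex, so the homothety maps $A\to\mathrm{RHom}_A(R,R)$ and $A\to\mathrm{RHom}_{A^{\op}}(R,R)$ are isomorphisms; since $R=\Omega(d)$ sits in a single degree, these say precisely that $\End_A(\Omega)=A=\End_{A^{\op}}(\Omega)$ and that the higher self-extensions of $\Omega$ vanish on both sides. Together with the finite injective dimension of $R$ (inherited from the balanced dualizing complex) and the invertibility of the $(S,S)$-bimodule $V$ appearing in \eqref{E1.1.1}, the standard theory of dualizing complexes over AS Gorenstein algebras then forces $\Omega$ to be finitely generated projective on each side, i.e.\ an invertible $A$-bimodule; the left-hand analysis agrees with the right-hand one because $R$ is balanced.

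I expect the main obstacle to be the concentration step: justifying that the vanishing of $\Ext^i_A(S,A)$ away from $i=d$ actually propagates to the vanishing of the torsion parts $\Gamma_{\fm}(I^i)$. This needs the minimal injective resolution to be controlled enough that its Bass numbers are finite and are genuinely computed by $\Ext^i_A(S,A)$, which is exactly what the existence of a balanced dualizing complex guarantees (equivalently, the $\chi$-condition together with finite local cohomological dimension). By comparison, the final invertibility step is the ``well-known'' ingredient and can largely be quoted from the existing theory of dualizing complexes over (generalized) AS Gorenstein algebras.
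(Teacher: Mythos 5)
The paper gives no proof of this corollary at all (it is stated as ``well-known''), so there is no internal argument to match against; judged on its own terms, your reduction and concentration steps are correct and follow the standard route. Local duality with $M=A$ gives $R\cong R\Gamma_{\fm}(A)^{\ast}$ (as complexes of bimodules, by balancedness), and the minimal-injective-resolution argument works in this setting: since $A$ is locally finite and $\N$-graded, every graded simple is concentrated in a single degree, so $\fm=J(A)\supseteq A_{\geq 1}$ has finite codimension; hence every nonzero $\fm$-torsion submodule of $I^i$ has nonzero socle, minimality identifies $\Hom_A(S,I^i)$ with $\Ext^i_A(S,A)$, and the generalized AS Gorenstein condition kills $\Gamma_{\fm}(I^i)$ for $i\neq d$, yielding $R\cong \Omega[d]$ with $\Omega=\H^d_{\fm}(A)^{\ast}$. (This is exactly how the paper later uses the corollary, writing $R=\Omega[2]$ in the proof of Theorem \ref{xxthm2.10}, so your reading of ``$\Omega(d)$'' as the complex shift is the intended one. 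Also note you do not need the decomposition of $\Gamma_{\fm}(I^i)$ into hulls $E(S_j)$ --- which would require stability of the torsion theory --- the socle argument alone suffices.)

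The genuine gap is in the invertibility step. The package you propose to quote --- homothety isomorphisms $\End_A(\Omega)\cong A\cong\End_{A^{\op}}(\Omega)$, vanishing higher self-extensions, finite injective dimension, finite generation --- is precisely the assertion that $\Omega[d]$ is a dualizing complex, and a dualizing complex concentrated in one degree need not be invertible: over a commutative Cohen-Macaulay local non-Gorenstein ring the canonical module $\omega$ satisfies all of these conditions yet is not projective. So no ``standard theory'' converts that list alone into projectivity; the AS Gorenstein hypothesis must be used a second time, through $V$, and your sketch never supplies the mechanism by which $V$ enters. The standard mechanism is a second application of local duality on the other variable: since $\Omega^{\ast}\cong R\Gamma_{\fm}(A)[d]$ and ${\mathrm{RHom}}_A(S,R\Gamma_{\fm}(A))\cong{\mathrm{RHom}}_A(S,A)$ for the torsion module $S$ (a standard fact in Van den Bergh's formalism, available here because $A$ has a balanced dualizing complex), one computes $(S\otimes^{\rm L}_A\Omega)^{\ast}\cong {\mathrm{RHom}}_A(S,A)[d]\cong V$, concentrated in cohomological degree $0$. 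Hence ${\rm Tor}^A_i(S,\Omega)=0$ for $i\geq 1$ and $S\otimes_A\Omega\cong V^{\ast}$; reading this off a minimal graded projective resolution of ${_A\Omega}$ (whose differentials die after applying $S\otimes_A-$) shows ${_A\Omega}$ is projective with top $V^{\ast}$ invertible over $S$, and symmetrically $\Omega_A$ is projective; only then do the homothety isomorphisms produce the inverse $\Hom_A(\Omega,A)$ and invertibility. Your instinct that the invertibility of $V$ is the essential input is correct, but as written it sits inertly in a list of hypotheses; the Tor-vanishing/graded Nakayama argument is where it actually does the work.
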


\section{Preparations}
\label{xxsec2}

In this section we will give the relation between
Gabriel quivers, pretzeled quivers of graphs, and 
noncommutative quasi-resolutions (NQRs). 

\subsection{Gabriel Quivers}
\label{xxsec2.1}
Let $Q$ be a quiver that has finitely many vertices and arrows. 
If we label the vertices of $Q$ by integers from $1$ to $d$, 
then the adjacency matrix of $Q$ is a square $d\times d$-matrix 
over ${\mathbb N}$. It is clear that there is a one-to-one 
correspondence between 
\begin{equation}
\label{E2.0.1}\tag{E2.0.1}
\{{\text{quivers with vertices labeled $\{1,2,\cdots,d\}$}}\}
\Longleftrightarrow
\{{\text{$d\times d$-matrices over ${\mathbb N}$.}}\}
\end{equation}
For this reason, the adjacency matrix of $Q$ is also denoted by 
$Q$ if no confusion occurs. The opposite quiver of $Q$ is 
obtained by changing the direction of each arrow in $Q$. Hence 
the adjacency matrix of the opposite quiver of $Q$ is the 
transpose of the adjacency matrix of $Q$. We denote the opposite 
quiver of $Q$ by $Q^{op}$. 

Next we review the definition of a Gabriel quiver. Suppose that 
$A$ is locally finite graded and elementary with 
$S=A/J(A)=\Bbbk^{\oplus d}$. One can lift (not necessarily uniquely) the 
$d$ primitive orthogonal idempotents of $S$ to an orthogonal 
family of primitive idempotents with $1=e_1+\cdots+e_n$ in 
$A_0$, see \cite[Corollary 21.32]{Lam}.  Notice that 
$A=\bigoplus_{i=1}^ne_iA$, so every $e_iA$ is a graded 
projective right $A$-module, which is an indecomposable module 
since $e_i$ is a primitive idempotent. Then we get $d$ distinct 
simple right $A$-modules $S_i:=e_iS=e_iA/e_iJ(A)$, and every 
simple graded right module is isomorphic to a shift of one of 
the $S_i$ for some $1\leq i\leq d$.

\begin{definition}
\label{xxdef2.1}
Let $A$ be an elementary, locally finite, graded algebra such that
$A/J(A)=\Bbbk^{\oplus d}$. The {\it Gabriel quiver 
$\mathcal{G}(A)$} of $A$ is defined by
\begin{enumerate}
\item[$\bullet$] 
vertices: graded simple right $A$-modules $S_1,\ldots,S_d$
corresponding to individual projection to $\Bbbk$.
\item[$\bullet$] 
arrows: $S_i\xrightarrow {q_{ij}}S_j$ if $q_{ij}=
\dim_{\Bbbk}\Ext_{A}^1(S_j,S_i)_{-1}$ where $\Ext_{A}^1(S_j,S_i)$
has a natural ${\mathbb Z}$-grading.
\end{enumerate}
Under the identification in \eqref{E2.0.1}, $\mathcal{G}(A)
=(q_{ij})_{d\times d}$ where $q_{ij}$ is defined above.
\end{definition}

Note that if $A$ in the above definition is Koszul in the sense of 
\cite[Definition 1.5]{MV}, then $\Ext_{A}^1(S_j,S_i)$ is 
concentrated in degree $-1$. In this case, 
$$q_{ij}=\dim_{\Bbbk}\Ext_{A}^1(S_j,S_i)_{-1}=
\dim_{\Bbbk} \Ext_{A}^1(S_j,S_i).$$

\begin{remark}
\label{xxrem2.2}
Suppose $A_0=\Bbbk^{\oplus d}$.
\begin{enumerate}
\item[(1)]
Let $P_i=e_i A$. Then $\{P_1,\cdots,P_d\}$ is a complete
list of indecomposable graded projective right $A$-modules
up to degree shifts and isomorphisms.
\item[(2)]
It is easy to see that
$$q_{ij}=\dim_{\Bbbk} \Hom_{A}(P_i, P_j)_1$$
for all $1\leq i,j\leq d$.
\end{enumerate}
\end{remark}

\subsection{Twists of a quiver and Pretzelizations}
\label{xxsec2.2}
Let $Q=(q_{ij})_{d\times d}$ be a quiver with $d$ vertices (or a 
$d\times d$-matrix over ${\mathbb N}$). A {\it graph} is a class 
of special quivers $Q$ with $q_{ij}=q_{ji}$ for all $i,j$.
%\footnote{A graph in this paper is slightly different from 
%a usual graph in graph theory.} 
Then a quiver $Q$ is a graph if and only if $Q=Q^{op}$. A graph is also
called a {\it symmetric quiver}. The example given in \eqref{E2.6.1} 
is a graph, and the example given in \eqref{E2.6.2} is not a graph in 
this paper.

Let $\sigma$ be an automorphism of the vertex set  $\{1,\cdots,d\}$. 
Then $\sigma$ induces an automorphism of $Q$ if and only if 
$q_{\sigma(i)\sigma(j)}=q_{ij}$ for all $i,j$. 

\begin{definition}
\label{xxdef2.3} \cite{BQWZ}
Let $Q:=(q_{ij})$ be a quiver and $\sigma$ be an automorphism of $Q$. 
The {\it twist} of $Q$ associated to $\sigma$, denoted by 
$^\sigma Q$, is the quiver corresponding to the matrix 
$P_{\sigma} Q (=QP_{\sigma})$ where $P_{\sigma}$ is the 
permutation matrix associated to $\sigma$; in other words,
$(^\sigma Q)_{ij}=q_{\sigma(i)j}=q_{i\sigma^{-1}(j)}$ for 
all $i,j$.
\end{definition}

Let $G$ be a graph (or symmetric quiver) and $\sigma$ be an
automorphism of the quiver $G$. Let $Q$ be the twisted quiver
$^{\sigma} G$. Then $Q^{op}={^{\sigma^{-2}} Q}$, which follows
from the following linear algebra computation
\begin{equation}
\label{E2.3.1}\tag{E2.3.1}
Q^{op}=(P_{\sigma} G)^{op}=(GP_{\sigma})^{op}
=P_{\sigma}^{-1} G^{op}=P_{{\sigma}^{-2}} Q={^{\sigma^{-2}} Q}.
\end{equation}

\begin{definition} \cite{BQWZ}
\label{xxdef2.4} 
Let $G$ be a graph. 
\begin{enumerate}
\item[(1)]
A quiver $Q$ is called a {\it pretzelization} of $G$ or a 
{\it pretzeled} quiver of $G$ if $Q\cup Q$ is a twisted quiver of 
a finite disjoint union of $G$. It is possible that $Q$ itself 
is a twisted quiver of another finite disjoint union of $G$. 
In general, a pretzelization of a graph is not a graph.
\item[(2)]
We say a graph $G$ is of {\it $\widetilde{A}\widetilde{D}\widetilde{E}$ 
type} if it is of type 
$$ \widetilde{A}_n, %\widetilde{A}_{12}, 
\widetilde{D}_n, \widetilde{L}_n, \widetilde{DL}_n,
\widetilde{E}_6, \widetilde{E}_7, \widetilde{E}_8$$
listed in \cite[Theorem 2]{HPR}. 
\end{enumerate}
\end{definition}

Assuming the readers are familiar with $\widetilde{A}
\widetilde{D}\widetilde{E}$ graphs which can be found in 
many papers including \cite{HPR}. We only list 
$\widetilde{L}_n$ for $n\geq 0$ and $\widetilde{DL}_n$ 
for $n\geq 2$ here:
\begin{eqnarray*}
&&\begin{tikzpicture}
\draw[black,thick] (0,0) -- (1.5,0);
\draw[black, thick] (1.5,0) -- (3,0);
\draw[dashed][black, thick] (3,0) -- (4.5,0);
\draw[black, thick] (4.5,0) -- (6,0);
\filldraw[black] (0,0) circle (1pt) node[anchor=west] [below]{0};
\filldraw[black] (1.5,0) circle (1pt) node[anchor=west] [below]{1};
\filldraw[black] (3,0) circle (1pt) node[anchor=west][below] {2};
\filldraw[black] (4.5,0) circle (1pt) node[anchor=west][below] {$n-1$};
\filldraw[black] (6,0) circle (1pt) node[anchor=west] [below]{$n$};
\node (C1a) at (-0.12,0)  {};
\draw[-]  (C1a) edge [in=225,out=135,loop,looseness=24] (C1a)node[left] {$\widetilde{L}_{n}:$\ \ \ \ \ \ \ \ \ \ \ \ \ \ \ \ \ \  } ;
\node (C1b) at (6.1,0)  {};
\draw[-]  (C1b) edge [in=45,out=315,loop,looseness=24] (C1b)node[left] { } ;
\end{tikzpicture}\\
%\end{center}
%\begin{center}
&&\begin{tikzpicture}
\draw[black,thick] (0.23,0.75) -- (1.5,0);
\draw[black,thick] (0.23,-0.75) -- (1.5,0);
\draw[black, thick] (1.5,0) -- (3,0)node[left] {$\widetilde{DL}_n:$\ \ \ \ \ \ \ \ \ \ \ \ \ \ \ \ \ \ \ \ \ \ \ \ \ \ \  \ \ \ \  } ;
\draw[dashed][black, thick] (3,0) -- (4.5,0);
\draw[black, thick] (4.5,0) -- (6,0);
\filldraw[black] (0.23,0.75) circle (1pt) node[anchor=west] [below]{0};
\filldraw[black] (0.23,-0.75) circle (1pt) node[anchor=west] [below]{1};
\filldraw[black] (1.5,0) circle (1pt) node[anchor=west] [below]{2};
\filldraw[black] (3,0) circle (1pt) node[anchor=west][below] {3};
\filldraw[black] (4.5,0) circle (1pt) node[anchor=west][below] {$n-1$};
\filldraw[black] (6,0) circle (1pt) node[anchor=west] [below]{$n$};
\node (C1b) at (6.1,0)  {};
\draw[-]  (C1b) edge [in=45,out=315,loop,looseness=24] (C1b)node[left] { } ;
\end{tikzpicture}
\end{eqnarray*}

In the above, the vertex set is $\{0,1,\cdots,n\}$. If there is 
an edge between vertices $i$ and $j$, then both $q_{ij}$
and $q_{ji}$ are 1. If there is a loop at vertex $i$, then $q_{ii}=1$. 

%Note that $\widetilde{A}_{12}$ can be considered as a special case
%of $\widetilde{A}_n$. 

A result of Smith \cite{Sm} states that a simple graph $G$ (i.e., a graph 
with no double edges and loops) has spectral radius 2 if and only if it 
is either $\widetilde{A}_n$, $\widetilde{D}_n$, $\widetilde{E}_6$, 
$\widetilde{E}_7$ or $\widetilde{E}_8$, see also \cite[Theorem 1.3]{DGF}. 
The following result is a folklore for some experts, which follows 
from the proof of \cite[Theorem 2]{HPR}. 

\begin{lemma}
\label{xxlem2.5}
Let $G$ be a graph {\rm{(}}i.e., a symmetric quiver{\rm{)}} which is 
not necessarily simple. Then $\rho(G)=2$ if and only if $G$ is listed 
in Definition \ref{xxdef2.4}(2).
\end{lemma}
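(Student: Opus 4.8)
The plan is to read $\rho(G)$ as the Perron root of the adjacency matrix $A=A(G)$, which for a symmetric quiver is a symmetric nonnegative integer matrix, and to reduce everything to the simple loopless case already handled by Smith's theorem \cite{Sm}. I first reduce to $G$ connected: in general $\rho(G)$ is the maximum of $\rho$ over the connected components, and every graph in Definition \ref{xxdef2.4}(2) is connected, so I treat the connected case. Then $A$ is irreducible, so by Perron--Frobenius $\rho(G)$ is a simple eigenvalue with a strictly positive eigenvector $\mathbf{x}$. Two standard facts will do the work: Cauchy interlacing, giving $\rho(A_U)\le\rho(A)$ for every principal submatrix $A_U$; and \emph{strict} monotonicity, namely $\rho(A_U)<\rho(A)$ whenever $U\subsetneq V$, since equality would make the zero-extension of the Perron vector of $A_U$ a nonnegative $\rho(A)$-eigenvector of $A$ with a zero entry, contradicting positivity and simplicity of the Perron vector.

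Assuming $\rho(G)=2$, I would first dispose of multiplicities and of heavy loops. A pair of distinct vertices joined by $q_{ij}\ge 2$ edges contains the principal submatrix $\left(\begin{smallmatrix}0&q_{ij}\\ q_{ij}&0\end{smallmatrix}\right)$ of spectral radius $q_{ij}\ge 2$, so strict monotonicity forces $G$ to equal this submatrix; hence $q_{ij}=2$ and $G=\widetilde{A}_1$. Likewise a loop of weight $q_{ii}=c$ yields the $1\times 1$ submatrix $(c)$, so $c\le 2$, and $c=2$ forces $G=\widetilde{L}_0$. Thus, apart from the two exceptional graphs $\widetilde{A}_1$ and $\widetilde{L}_0$, I may assume from now on that $G$ is simple away from its loops and that every loop has weight $1$.

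Next comes the key device, an \emph{unfolding} that removes the loops while preserving the spectral radius. Write $A=B+D$ with $B$ the (simple) non-loop part and $D$ the diagonal weight-$1$ loop part, and form the loopless graph $\widehat{G}$ on $V\times\{+,-\}$ with adjacency matrix $\left(\begin{smallmatrix}B&D\\ D&B\end{smallmatrix}\right)$; its edges are two copies of $B$ together with a cross-edge $v^{+}v^{-}$ for each loop of $G$. A direct eigenvector computation shows that the spectrum of this matrix is the union of the spectra of $B+D=A$ and $B-D$, and since $|B-D|\le B+D$ entrywise we get $\rho(\widehat{G})=\rho(A)=2$. Moreover $\widehat{G}$ is simple (cross-edges join distinct vertices and all weights are $1$), and it is connected when $G$ has a loop, because the connectivity of $B$ equals that of $G$ and at least one cross-edge links the two copies. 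If $G$ has no loop then $G$ itself is simple and loopless, and Smith's theorem gives $G\in\{\widetilde{A}_n,\widetilde{D}_n,\widetilde{E}_6,\widetilde{E}_7,\widetilde{E}_8\}$. If $G$ has a loop, Smith applied to $\widehat{G}$ gives $\widehat{G}\in\{\widetilde{A}_n,\widetilde{D}_n,\widetilde{E}_6,\widetilde{E}_7,\widetilde{E}_8\}$, and $G=\widehat{G}/\tau$ for the fixed-point-free sheet-swap involution $\tau\colon v^{\pm}\mapsto v^{\mp}$, loops of $G$ being exactly the edges of $\widehat{G}$ whose endpoints $\tau$ interchanges. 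It then remains to enumerate such quotients: $\widetilde{E}_6,\widetilde{E}_7,\widetilde{E}_8$ admit no fixed-point-free involution (their automorphism groups fix a central or branch vertex), the edge-reflections of the even cycle $\widetilde{A}_{2n+1}$ produce a path with a loop at each end, i.e. $\widetilde{L}_n$, and the end-exchanging flip of $\widetilde{D}_{2n+1}$ (free precisely when the central path is reflected through an edge) produces $\widetilde{DL}_n$. For the converse ``if'' direction, Smith covers the loopless types, while $\widetilde{L}_n$ and $\widetilde{DL}_n$ are handled by exhibiting explicit positive $2$-eigenvectors, namely the all-ones vector for $\widetilde{L}_n$ and $(1,1,2,\dots,2)$ for $\widetilde{DL}_n$.

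The main obstacle I anticipate is precisely the last bookkeeping step: classifying the fixed-point-free involutions of the affine diagrams and, more delicately, verifying the compatibility that makes $G=\widehat{G}/\tau$ unfold back to $\widehat{G}$ (equivalently, that every edge of $\widehat{G}$ joining the two sheets has the form $v^{+}v^{-}$), so that no loop-bearing quotient is missed or double-counted. This is the content that ``follows from the proof of \cite[Theorem 2]{HPR}''; an alternative route that sidesteps the involution analysis is to observe that, by Perron--Frobenius, $\rho(G)=2$ for connected $G$ is equivalent to the existence of a positive additive function ($A f=2f$ with $f>0$), and to invoke the classification of connected graphs carrying such a function in \cite[Theorem 2]{HPR}, whose list is exactly that of Definition \ref{xxdef2.4}(2).
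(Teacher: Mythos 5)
Your proposal is correct, but it proves the lemma by a genuinely different route than the paper. The paper offers no self-contained argument at all: it declares the result folklore and says it ``follows from the proof of \cite[Theorem 2]{HPR}'', i.e., precisely the alternative you sketch in your final sentences --- by Perron--Frobenius, $\rho(G)=2$ for connected $G$ is equivalent to the existence of a positive additive function $Af=2f$, and \cite{HPR} classifies the connected graphs (loops and multiple edges allowed) carrying one, yielding exactly the list of Definition \ref{xxdef2.4}(2). Your main argument instead reduces to Smith's theorem \cite{Sm} for simple loopless graphs: strict Perron monotonicity on principal submatrices disposes of heavy multiplicities and heavy loops (isolating $\widetilde{A}_1$ and $\widetilde{L}_0$), and then the unfolding $\left(\begin{smallmatrix}B&D\\ D&B\end{smallmatrix}\right)$, whose spectrum is $\operatorname{spec}(B+D)\cup\operatorname{spec}(B-D)$ with $\rho(B-D)\le\rho(B+D)$, converts a loop-bearing $G$ into a simple loopless double cover of the same spectral radius; the quotient enumeration by fixed-point-free involutions (edge-reflections of even cycles giving $\widetilde{L}_n$, end-exchanging flips of $\widetilde{D}_{2n+1}$ giving $\widetilde{DL}_n$, no free involutions on $\widetilde{E}_6,\widetilde{E}_7,\widetilde{E}_8$) and your explicit $2$-eigenvectors for the converse all check out, including the sheet-compatibility point you flag (it correctly excludes the rotation/antipodal involutions, which have no $\tau$-invariant edges and hence cannot arise when $G$ has a loop). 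What each approach buys: the paper's citation is shorter but opaque, resting on the additive-function machinery of \cite{HPR}; your unfolding is self-contained modulo the classical theorem of \cite{Sm}, and it moreover makes visible why $\widetilde{L}_n$ and $\widetilde{DL}_n$ are folded forms of $\widetilde{A}_{2n+1}$ and $\widetilde{D}_{2n+1}$ --- which is exactly the pretzelization relation the paper attributes to \cite{BQWZ} right after the lemma, so your proof in effect re-derives that remark rather than assuming it. Two small points of care: the statement implicitly concerns connected $G$ (disjoint unions with $\rho=2$ are not on the list), as your reduction tacitly assumes; and your reading of $\widetilde{L}_0$ as the single vertex with $q_{00}=2$ (the two drawn loops coinciding) is the right one, since with $q_{00}=1$ it would have $\rho=1$.
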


This result was also proved by Chen-Kirkman-Walton-Zhang 
when they were working on the project \cite{CKWZ1, CKWZ2, CKWZ3}, 
but it was removed by the authors in the final published version of 
\cite{CKWZ1, CKWZ2, CKWZ3}. Another related result in \cite{BQWZ}
is that graphs of types $\widetilde{L}_n$ and $\widetilde{DL}_n$ are 
pretzelizations of graphs of types $\widetilde{A}_n$ and $\widetilde{D}_n$.
Note that graphs of type $\widetilde{DL}_n$ and $\widetilde{L}_1$
appeared in \cite[Proposition7.1]{CKWZ1}. A key lemma concerning 
the pretzelization is the following.

\begin{lemma} \cite{BQWZ}
\label{xxlem2.6}
Let $Q$ be a quiver. Then $Q$ is a pretzelization of a graph 
if and only if $Q^{op}={^\mu Q}$ for some automorphism $\mu$
of the quiver $Q$.
\end{lemma}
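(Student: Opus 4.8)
The plan is to funnel both implications through one combinatorial criterion on the adjacency matrix $Q=(q_{ij})$: that the multiset of its column vectors coincides with the multiset of its row vectors. First I would record the elementary equivalence that, for a permutation $\mu$ of $\{1,\dots,d\}$, one has $Q^{op}={}^\mu Q$ if and only if this criterion holds. Indeed, by Definition~\ref{xxdef2.3} the identity $Q^{op}=P_\mu Q$ reads $q_{ji}=q_{\mu(i)j}$ for all $i,j$; fixing $i$, this says the $i$-th column of $Q$ equals the $\mu(i)$-th row, so as $i$ runs over the vertices the two multisets agree. Conversely, equality of the multisets produces a bijection $\mu$ with $\mathrm{col}_i(Q)=\mathrm{row}_{\mu(i)}(Q)$, i.e. $q_{ji}=q_{\mu(i)j}$; applying this relation twice gives $q_{ij}=q_{\mu(i)\mu(j)}$, so $\mu$ is automatically an automorphism of $Q$ and the twist ${}^\mu Q$ is legitimately defined. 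This also disposes of the point that Definition~\ref{xxdef2.3} only introduces twists along automorphisms.

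For the direction ``criterion $\Rightarrow$ pretzelization'' I would exhibit an explicit graph. On two disjoint copies $V_1\sqcup V_2$ of the vertex set, set $G:=\begin{pmatrix}0&Q^{op}\\ Q&0\end{pmatrix}$; since $(Q^{op})^{op}=Q$ this matrix is symmetric, hence a graph, for \emph{any} $Q$. Define $\sigma$ on $V_1\sqcup V_2$ by sending the first copy onto the second by the identity and the second copy onto the first by $\mu^{-1}$. A direct check, which is exactly where the hypothesis $q_{ji}=q_{\mu(i)j}$ enters, shows both that $\sigma$ is an automorphism of $G$ and that $P_\sigma G=Q\sqcup Q$, i.e. $Q\sqcup Q={}^\sigma G$. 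Taking $G$ as a one-fold disjoint union in Definition~\ref{xxdef2.4}(1) then displays $Q$ as a pretzelization of the graph $G$.

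For the reverse direction I would unwind the definition: if $Q$ is a pretzelization of a graph $G$, then $Q\sqcup Q={}^\sigma\big(\bigsqcup_{k=1}^m G\big)$ for some $m$ and some automorphism $\sigma$ of the graph $\bigsqcup_k G$. Feeding this graph into the computation \eqref{E2.3.1} gives $(Q\sqcup Q)^{op}={}^{\sigma^{-2}}(Q\sqcup Q)$, so by the first paragraph the columns and rows of $Q\sqcup Q$ form the same multiset. The main obstacle, which I expect to be the only real work, is to descend this equality from $Q\sqcup Q$ back to $Q$. Here I would note that each column of $Q\sqcup Q$ is some $\mathrm{col}_i(Q)$ placed in the $V_1$- or $V_2$-block (zero elsewhere), and similarly for rows; comparing, for a fixed \emph{nonzero} vector $v$, the multiplicity of its $V_1$-copy among columns and among rows forces $\#\{i:\mathrm{col}_i(Q)=v\}=\#\{i:\mathrm{row}_i(Q)=v\}$. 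The delicate case is $v=0$, where a $V_1$-supported zero column could a priori be paired with a $V_2$-supported zero row; this is settled by counting, since both multisets for $Q$ have $d$ elements and agree on every nonzero vector, hence also at $0$. Thus the columns and rows of $Q$ form the same multiset, and the first paragraph returns an automorphism $\mu$ with $Q^{op}={}^\mu Q$.
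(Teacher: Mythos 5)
Your proof is correct, but there is nothing in the paper to compare it against: Lemma \ref{xxlem2.6} is stated without proof, cited to the unpublished preprint [BQWZ] (``Pretzelization, in preparation''), so your write-up is an independent, self-contained argument rather than a variant of the paper's. Checking it in detail: the multiset criterion is the right pivot, and your observation that any bijection $\mu$ matching columns to rows automatically satisfies $q_{ij}=q_{\mu(i)\mu(j)}$ is essential, since Definition \ref{xxdef2.3} only defines twists along automorphisms of $Q$ --- without it the ``if'' direction of your criterion would not even typecheck. The ``direct check'' you defer in the forward direction does go through: with $G=\left(\begin{smallmatrix}0&Q^{op}\\ Q&0\end{smallmatrix}\right)$ (the bipartite double of $Q$) and $\sigma$ mapping the first block identically onto the second and the second onto the first by $\mu^{-1}$, one verifies $G_{\sigma(x)\sigma(y)}=G_{xy}$ in all four block cases and $({}^{\sigma}G)_{i'j'}=q_{j,\mu^{-1}(i)}=q_{ij}$, both using exactly $q_{ji}=q_{\mu(i)j}$, so ${}^{\sigma}G=Q\sqcup Q$ as claimed. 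In the converse direction, your use of \eqref{E2.3.1} applies verbatim to the symmetric quiver $\bigsqcup_k G$, and the descent from $Q\sqcup Q$ to $Q$ is handled correctly: for nonzero $v$ the block structure separates the two copies, and the multiplicity at $v=0$ follows by your counting argument since both multisets have $d$ elements (moreover, your first-paragraph criterion only needs $(Q\sqcup Q)^{op}=P_{\tau}(Q\sqcup Q)$ as a matrix identity, so you never need to verify separately that $\sigma^{-2}$ is an automorphism of $Q\sqcup Q$). One stylistic remark: your constructed graph $G$ is bipartite by design, so it will generally differ from the graph produced in contexts like Theorem \ref{xxthm0.2}, where $Q$ arises as a twist ${}^{\tau}G'$ of an $\widetilde{A}\widetilde{D}\widetilde{E}$ graph $G'$; the lemma only asserts existence of some graph, so this is harmless, but worth noting that pretzelization does not single out a canonical underlying graph.
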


The automorphism $\mu$ in the above lemma is called a 
{\it Nakayama automorphism} of the quiver $Q$. For example, 
$\sigma^{-2}$ is a Nakayama automorphism of $Q$ in 
\eqref{E2.3.1}.

For example, if $Q$ is the Dynkin graph of type $A_3$, 
\begin{equation}
\label{E2.6.1}\tag{E2.6.1}
\end{equation}
\begin{center}
\begin{tikzpicture}
\draw[->][black] (0.1,0.1) -- (1.4,0.1);
\draw[<-][black] (0.1,-0.1) -- (1.4,-0.1);
\draw[->][black] (1.6,0.1) -- (2.9,0.1);
\draw[<-][black] (1.6,-0.1) -- (2.9,-0.1);
\filldraw[black] (0,0) circle (1pt) node[anchor=west] {};
\filldraw[black] (1.5,0) circle (1pt) node[anchor=west] {};
\filldraw[black] (3,0) circle (1pt) node[anchor=west] {};
\end{tikzpicture}
\end{center}

\noindent
then one twist of the disconnected graph $Q\cup Q\cup Q$ 
(or a pretzelization of the graph $Q$) is the following 
connected pretzel-shaped quiver:

\begin{equation}
\label{E2.6.2}\tag{E2.6.2}
\end{equation}
{\tiny {\bf \begin{equation}
\notag%\label{FF2.3.1}\tag{FF2.3.1}
\begin{tikzcd}
&\cdot^1 \arrow[dddrrr,shift right=.6ex]
&&&&&&\cdot^3\arrow[dddlll,shift right=.6ex]
\\
&&&&\cdot^2\arrow[dlll, shift right=.4ex]\arrow[drrr,shift left=.4ex]
\\
&\cdot^4\arrow[dddrrr,shift right=.6ex] 
&&&&&&\cdot^6\arrow[dddlll,shift right=.6ex]
\\
&&&&\cdot^5 \arrow[dllll, shift right=.5ex]\arrow[drrrr,shift left=.5ex]
\\
\cdot^7\arrow[uuurrrr,bend left=0]
 &&&&&&&&\cdot^9\arrow[uuullll,bend right=0]
  \\
&&&&\cdot^8\arrow[uuuuulll,bend left=130] \arrow[uuuuurrr,bend right=130] 
\end{tikzcd}
\end{equation}}}

\begin{proposition}
\label{xxpro2.7}
Let $Q$ be a quiver such that $Q^{op}={^\sigma Q}$ for some 
automorphism $\sigma$ of $Q$. If $\rho(Q)=2$, then 
$Q$ is a pretzelization of a graph of $\widetilde{A}
\widetilde{D}\widetilde{E}$ types.
\end{proposition}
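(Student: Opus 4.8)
The plan is to prove Proposition \ref{xxpro2.7} by combining Lemma \ref{xxlem2.6} with Lemma \ref{xxlem2.5}, reducing everything to a statement about spectral radii of the underlying graph. The hypothesis gives us an automorphism $\sigma$ of $Q$ with $Q^{op} = {^\sigma Q}$, which by Lemma \ref{xxlem2.6} already tells us that $Q$ is a pretzelization of \emph{some} graph $G$; the content of the proposition is to upgrade ``some graph'' to ``a graph of $\widetilde{A}\widetilde{D}\widetilde{E}$ type'' using the spectral radius condition $\rho(Q)=2$. By Lemma \ref{xxlem2.5}, it suffices to produce a graph $G$ with $\rho(G)=2$ of which $Q$ is a pretzelization.

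First I would unwind the definition of pretzelization (Definition \ref{xxdef2.4}(1)) in matrix language. The relation $Q^{op}={^\sigma Q}$ reads, via Definition \ref{xxdef2.3}, as $Q^{\mathrm{T}} = P_\sigma Q$ at the level of adjacency matrices. The strategy is to form the symmetric quiver $G := Q \cup Q^{op}$ in a suitable sense; concretely, one wants to exhibit a disjoint union $Q \cup Q$ (or a related doubling) whose adjacency matrix is a twist of a genuine graph. Following the computation pattern in \eqref{E2.3.1}, one checks that the block matrix
\begin{equation}
\notag
G = \begin{pmatrix} 0 & Q \\ Q^{op} & 0 \end{pmatrix}
\end{equation}
is symmetric, hence a graph, and that a twist by the permutation swapping the two copies recovers $Q \cup Q$, witnessing $Q$ as a pretzelization of $G$.

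Next I would relate the spectral radii. The key linear-algebra fact is that twisting a quiver by a permutation does not change its spectral radius, since $P_\sigma Q$ and $Q$ are related by a permutation similarity once one uses $P_\sigma Q = Q P_\sigma$ (the automorphism condition), so $\rho({^\sigma Q}) = \rho(Q)$. For the doubled graph $G$ above, the eigenvalues of the block matrix are the square roots of the eigenvalues of $Q^{op}Q$, and using $Q^{op} = P_\sigma Q$ together with $\rho(Q)=2$ one computes $\rho(G) = 2$ as well. Thus $G$ is a graph with spectral radius $2$, and Lemma \ref{xxlem2.5} forces $G$ to be a finite disjoint union of graphs of $\widetilde{A}\widetilde{D}\widetilde{E}$ type; since $Q$ is a pretzelization of this $G$, the proposition follows.

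The main obstacle I anticipate is the bookkeeping in the second and third steps: making precise which ``doubling'' of $Q$ yields a symmetric matrix that is simultaneously a twist of a disjoint union of copies of a single $\widetilde{A}\widetilde{D}\widetilde{E}$ graph, and verifying carefully that the spectral radius is genuinely preserved under these operations (the permutation similarity $\rho({^\sigma Q}) = \rho(Q)$ relies on $P_\sigma$ commuting with $Q$, which is exactly the automorphism hypothesis, so this must be invoked cleanly). A secondary subtlety is that Lemma \ref{xxlem2.5} is stated for a single connected-or-not graph, so I would need to confirm that its conclusion applies componentwise to the disjoint union, i.e.\ that each connected component has spectral radius at most $2$ and at least one achieves $2$; since $\rho$ of a disjoint union is the maximum over components, the condition $\rho(G)=2$ pins down each component to lie on the $\widetilde{A}\widetilde{D}\widetilde{E}$ list (allowing also finite-type components of smaller radius only if the definition of pretzelization permits, which I would check against Definition \ref{xxdef2.4}).
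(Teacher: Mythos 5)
Your skeleton coincides with the paper's proof, which is only a few lines long: invoke Lemma \ref{xxlem2.6} to see that $Q$ is a pretzelization of some graph $G$, i.e.\ $Q\cup Q={^\tau G'}$ for a finite disjoint union $G'$ of copies of $G$; cite \cite{BQWZ} for the fact that $\rho$ is stable under twists, so $\rho(G)=\rho(Q\cup Q)=\rho(Q)=2$; then conclude by Lemma \ref{xxlem2.5}. Where you differ is that you try to open the \cite{BQWZ} black boxes with explicit linear algebra: the symmetric doubling $G=\begin{pmatrix} 0 & Q \\ Q^{op} & 0 \end{pmatrix}$ (in effect re-proving the relevant direction of Lemma \ref{xxlem2.6}, which is redundant once you have cited it, but makes the argument self-contained) and the computation of $\rho(G)$ via $G^2$. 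This unpacking is sound in outline and arguably a service, since \cite{BQWZ} is listed as ``in preparation.''

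Two of your justifications are wrong as stated, though both are repairable. First, the plain swap of the two copies is \emph{not} an automorphism of your doubled graph $G$ unless $Q$ is already symmetric, and twisting by it produces $Q^{op}\cup Q$, not $Q\cup Q$; you must compose the swap with $\sigma^{-1}$ on one factor, i.e.\ take $P_\tau=\begin{pmatrix} 0 & P_\sigma^{-1} \\ I & 0 \end{pmatrix}$, which gives $P_\tau G=Q\cup Q$, and the verification that this $\tau$ is an automorphism of $G$ (required before Definition \ref{xxdef2.3} even permits the twist) is exactly where $P_\sigma Q=QP_\sigma$ enters. Second, ``permutation similarity'' does not yield $\rho({^\sigma Q})=\rho(Q)$: similarity only gives the vacuous $\rho(P_\sigma QP_\sigma^{-1})=\rho(P_\sigma Q)$. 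The correct use of commutativity is via the finite order $m$ of $\sigma$: $(P_\sigma Q)^m=P_\sigma^m Q^m=Q^m$, hence $\rho(P_\sigma Q)^m=\rho(Q^m)=\rho(Q)^m$ and $\rho({^\sigma Q})=\rho(Q)$; the same trick handles your block computation, since $\rho(G)^2=\rho(Q^{op}Q)=\rho(P_\sigma Q^2)=\rho(Q^2)=4$. Finally, your worry about disconnected $G$ and possible components of spectral radius strictly less than $2$ is legitimate, and the paper's own proof is equally silent on it; in the paper's application (the proof of Theorem \ref{xxthm0.2}(1)) the quiver $Q$ is strongly connected by \cite[Lemma 7.3]{RR2}, which dissolves the issue, and in general the conclusion must indeed be read componentwise as you propose.
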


\begin{proof} 
By Lemma \ref{xxlem2.6}, $Q$ is a pretzelization of a graph
$G$. This means that $Q\cup Q={^\tau G}$ for some automorphism
$\tau$ of $G$. By a result of 
\cite{BQWZ}, $\rho$ is stable under twists of quiver. Since 
$\rho(Q)=2$, $\rho(G)=\rho(Q\cup Q)=\rho(Q)=2$. By \cite{BQWZ}
(also see \cite[Theorem 2]{HPR}),
$G$ is of $\widetilde{A}\widetilde{D}\widetilde{E}$ type in the
sense of Definition \ref{xxdef2.4}(2).
\end{proof}

\subsection{Depth and Maximal Cohen-Macaulay Modules}
\label{xxsec2.3}

We first collect some definitions from the literature. In the 
next definition we only consider graded right modules. The
same definition can be made for graded left modules. Since
we always consider graded algebras and graded modules, we 
sometimes omit the word ``graded''. The following definition
of a Cohn-Macaulay module is different from the concept of a 
Cohen-Macaulay algebra given in Definition \ref{xxdef1.8}.

\begin{definition}
\label{xxdef2.8}
Let $A$ be a noetherian, locally finite, graded algebra with finite 
GKdimension. Let $S=A/J(A)$. Let $M$ be a nonzero finitely generated 
graded right $A$-module.
\begin{enumerate}
\item[(1)]
The {\it depth} of $M$ is defined to be 
\begin{eqnarray*}
\depth_A M := \inf\{i|\Ext_A^i(S,M)\neq 0\} \in {\mathbb N}\cup\{+\infty\}.
\end{eqnarray*}
If no confusion can arise, we write $\depth M$ for $\depth_A M$.
\item[(2)]
We say $M$ is {\it Cohen-Macaulay} if $\depth M=\GKdim M$.
\item[(3)]
We say $M$ is a {\it maximal Cohen-Macaulay} (or {\it MCM} 
for short) module if $M$ is a Cohen-Macaulay and $\GKdim M=\GKdim A$.
\item[(4)]
We say $A$ is of {\it finite Cohen-Macaulay type} (in the graded sense)
if there are only finitely many graded MCM modules up to degree shifts
and isomorphisms.
\item[(5)]
$M$ is called {\it reflexive} if the natural map $M\to \Hom_{A^{op}}(\Hom_{A}(M,A),A)$
is an isomorphism.
\item[(6)]
Let $n$ be an integer. Then $M$ is called {\it $n$-pure} if 
$\GKdim N=n$ for every nonzero submodule $N\subseteq M$. 
\end{enumerate} 
\end{definition}

Note that the definition of an $n$-pure module in 
\cite[Definition 2.1(2)]{QWZ} is different from and related to ours. 
Some basic lemmas about the depth can be found in \cite[Section 5]{QWZ}.
The following lemma is clear. An object ${\mathcal M}$ in
$\qgr A$ is called 2-pure if ${\mathcal M}=\pi(M)$ for a 2-pure 
graded $A$-module $M$ and there is no nonzero sub-object 
${\mathcal N}\subseteq {\mathcal M}$ such that ${\mathcal N}
=\pi(N)$ for some $N\in \grmod A$ of GKdimension 1, where
$\pi$ is defined in \eqref{E1.8.1}.

\begin{lemma}
\label{xxlem2.9}
Let $A$ be a noetherian, locally finite, graded algebra with
$\GKdim A=2$. Suppose that $A$ is Auslander-Gorenstein and CM.
\begin{enumerate}
\item[(1)]
There is a bijection between $2$-pure objects in
$\qgr A$ and reflexive modules in $\grmod A$.
\item[(2)]
The functors $\pi$ and $\omega$ defined in 
\eqref{E1.8.3}-\eqref{E1.8.4} induce an equivalence between the
category of $2$-pure objects in $\qgr A$ and that of reflexive 
modules in $\grmod A$.
\end{enumerate}
\end{lemma}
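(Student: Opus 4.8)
The plan is to establish the bijection in part (1) first, and then upgrade it to the functorial equivalence in part (2), using the adjoint pair $(\pi,\omega)$ from \eqref{E1.8.3}--\eqref{E1.8.4} together with the Gabber closure identification already recorded in the excerpt. Since $\GKdim A=2$, the relevant quotient category is $\qgr A = \qgr_0 A$, so we are quotienting out the modules of $\GKdim \leq 0$, i.e.\ the finite-dimensional ones. First I would show that if $M\in\grmod A$ is reflexive, then $M$ is $2$-pure: by the Auslander--Gorenstein and CM hypotheses, a reflexive module over a $2$-dimensional such algebra cannot have a nonzero submodule of $\GKdim \leq 1$, because such a submodule would survive under the double-dual map $M\to\Hom_{A^{\op}}(\Hom_A(M,A),A)$ only if it had grade $\leq 0$, contradicting the CM equality $j(N)+\GKdim(N)=2$. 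Conversely, I would show reflexivity forces $M$ to have depth $2$ (equivalently, no finite-dimensional submodule and no finite-dimensional ``first obstruction''), which is exactly the condition for $M$ to be recovered from its image $\pi(M)$ by applying $\omega$.

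The heart of the argument is the correspondence itself. Given a $2$-pure object $\mathcal M=\pi(M)$ in $\qgr A$, I would take $\omega(\mathcal M)=\omega(\pi(M))$ and invoke the statement, already available in the excerpt via \cite[Proposition 2.10(2)]{QWZ}, that for a $(0+2)$-pure module $M$ over an Auslander--Gorenstein and CM algebra, $\omega(\pi(M))$ coincides with the Gabber closure of $M$. Thus $\omega$ sends $2$-pure objects to Gabber-closed modules, and the key classical fact is that over a $2$-dimensional Auslander--Gorenstein CM algebra the Gabber closure of a $2$-pure module is precisely its reflexive hull $M^{\vee\vee}:=\Hom_{A^{\op}}(\Hom_A(M,A),A)$. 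I would verify this via the local duality formula in Theorem \ref{xxthm1.12}: applying $R\Gamma_{\mathfrak m}$ and the balanced dualizing complex $R=\Omega(2)$ (Corollary \ref{xxcor1.13}, using that $A$ is generalized AS Gorenstein by Lemma \ref{xxlem1.10}) identifies $\Hom$-level double-duality with the top local-cohomology vanishing that characterizes reflexive modules in dimension $2$. This gives a map from $2$-pure objects to reflexive modules; in the other direction $\pi$ sends reflexive modules to $2$-pure objects (using the $2$-purity just established), and the unit and counit of the adjunction $(\pi,\omega)$ give mutually inverse natural transformations on these subcategories.

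For part (2), I would assemble the two maps of part (1) into functors. The counit $\pi\omega\to\id$ is an isomorphism on $\qgr A$ in general (since $\omega$ is a section up to the torsion subcategory), so $\pi\omega(\mathcal M)\cong\mathcal M$ for every $2$-pure $\mathcal M$. The unit $\id\to\omega\pi$ restricted to reflexive modules is an isomorphism precisely because a reflexive module equals its own Gabber closure, which is the content established above; this shows $\omega\pi(M)\cong M$ for reflexive $M$. Hence $\pi$ and $\omega$ restrict to mutually quasi-inverse equivalences between the category of $2$-pure objects in $\qgr A$ and the category of reflexive modules in $\grmod A$.

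The main obstacle I anticipate is the precise identification ``Gabber closure $=$ reflexive hull'' in part (1): one must control both the torsion (finite-dimensional submodule) and the cotorsion (the $\Ext^1$-type obstruction to recovering $M$ from $\pi(M)$) and match them against the two applications of $\Hom_A(-,A)$ defining $M^{\vee\vee}$. This requires care with the grading shift $R=\Omega(2)$ and with the exactness and purity bookkeeping supplied by the CM condition, but no genuinely new input beyond Theorem \ref{xxthm1.12}, Corollary \ref{xxcor1.13}, Lemma \ref{xxlem1.10}, and \cite[Proposition 2.10(2)]{QWZ}.
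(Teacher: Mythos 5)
Your overall route coincides with the paper's: show reflexive modules are $2$-pure, identify $\omega(\mathcal{M})$ with the Gabber closure via \cite[Proposition 2.10(2)]{QWZ}, and conclude from $\pi\omega\cong\id$ together with $\omega\pi(M)=M$ on reflexives. Your grade argument for purity is correct and in fact more self-contained than the paper's citation of \cite[Proposition 2.14]{QWZ}: if $N\subseteq M$ is nonzero with $\GKdim N\leq 1$, the CM equality forces $j(N)\geq 1$, so $\Hom_A(N,A)=0$, hence $N$ dies under the biduality map $M\to \Hom_{A^{\op}}(\Hom_A(M,A),A)$, contradicting its injectivity when $M$ is reflexive. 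The genuine gap lies in your mechanism for the key step ``Gabber closure $=$ reflexive hull'': you run it through local duality (Theorem \ref{xxthm1.12}) and Corollary \ref{xxcor1.13}, both of which presuppose a balanced dualizing complex, but Lemma \ref{xxlem2.9} does not assume one. The paper assumes a balanced dualizing complex in Theorem \ref{xxthm2.10} and in Theorem \ref{xxthm0.1}(a), but pointedly not in this lemma, and nowhere is it established (or claimed) that a noetherian, locally finite, Auslander-Gorenstein, CM algebra automatically admits one --- if that were automatic, hypothesis (a) would be redundant. Compounding this, your appeal to ``generalized AS Gorenstein by Lemma \ref{xxlem1.10}'' misreads that lemma: it assumes Auslander \emph{regular} and yields generalized AS \emph{regular}; no Gorenstein analogue is stated in the paper, so the hypotheses of Corollary \ref{xxcor1.13} are not verified even if a dualizing complex were granted. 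In effect your argument re-proves Theorem \ref{xxthm2.10}, which does carry hypothesis (a), rather than Lemma \ref{xxlem2.9} as stated.

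The repair is either to add the dualizing-complex hypothesis (harmless for the intended application, since Theorem \ref{xxthm0.1} assumes it anyway), or to do what the paper does and stay inside the Auslander-Gorenstein/CM framework: quote \cite[Proposition 2.14]{QWZ} for the facts that reflexive modules are $2$-pure and Gabber-closed (giving $\omega\pi(M)=M$), and \cite[Proposition 2.10(2)]{QWZ} for the fact that, for a $2$-pure $M$, $\omega\pi(M)$ is the Gabber closure $\widetilde{M}$ and is reflexive; then $\pi\omega\cong\id$ on the quotient category finishes part (2), with part (1) as an immediate consequence. Alternatively, your subsidiary claim $\omega\pi(M)=M$ for reflexive $M$ can be closed duality-free: reflexivity gives $\depth M=2$ by \cite[Lemma 5.6]{QWZ}, i.e. $R^i\Gamma_{\fm}(M)=0$ for $i=0,1$, and the standard torsion-theoretic exact sequence $0\to R^0\Gamma_{\fm}(M)\to M\to\omega\pi(M)\to R^1\Gamma_{\fm}(M)\to 0$ then yields the claim without ever invoking Theorem \ref{xxthm1.12}.
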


\begin{proof} We only prove part (2) as part (1) follows immediately 
from part (2).

(2) For every reflexive module $M$, by \cite[Proposition 2.14]{QWZ},
it is 2-pure (the definition of $n$-pure is slightly
different from the definition in this paper). 
Then $\pi(M)$ is 2-pure in $\qgr A$. Conversely, 
let ${\mathcal M}$ be a 2-pure object in $\qgr A$. Let $M$
be any finitely generated 2-pure module such that ${\mathcal M}
=\pi(M)$. Let $\widetilde{M}$ be the Gabber closure of $M$ 
defined in \cite[Definition 2.8]{QWZ}. Since two such $M$ 
differ only by finite dimensional vector spaces, $\widetilde{M}$
is independent of the choices of $M$. Or equivalently, 
$\widetilde{M}$ is only dependent on ${\mathcal M}$,
which is $\omega({\mathcal M})$. Therefore 
$$\pi\omega({\mathcal M})={\mathcal M}$$
for 2-pure objects in $\qgr A$ and
$$\omega\pi(M)=M$$
for reflexive objects in $\grmod A$. The assertion follows.
\end{proof}

Here is the main result in this subsection.

\begin{theorem}
\label{xxthm2.10} 
Let $A$ be a noetherian graded locally finite algebra with
$\GKdim$ $2$ and let $M$ be a finitely generated graded $A$-module. 
Suppose that 
\begin{enumerate}
\item[(a)]
$A$ has a balanced dualizing complex $R$, and 
\item[(b)]
$A$ is Auslander-Gorenstein and CM.
\end{enumerate}
Then $M$ is reflexive if and only if it is MCM in $\grmod A$. 
\end{theorem}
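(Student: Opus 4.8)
The plan is to run both implications through the single homological condition that $\Ext^i_A(M,A)=0$ for every $i\ge 1$; call this $(\star)$. Equivalently $(\star)$ says that $\mathrm{RHom}_A(M,A)$ is concentrated in cohomological degree $0$, where it equals $M^\ast:=\Hom_A(M,A)$. I would first show that $M$ is MCM if and only if $(\star)$ holds, and then that $M$ is reflexive if and only if $(\star)$ holds.

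For the MCM characterization I would use local duality. Writing $d=\GKdim A=2$, Corollary \ref{xxcor1.13} says the balanced dualizing complex is, up to a grading shift, an invertible $A$-bimodule $\Omega$ placed in cohomological degree $-d$, so $\mathrm{RHom}_A(M,R)\cong(\mathrm{RHom}_A(M,A)\otimes_A\Omega)[d]$. Plugging this into the isomorphism $R\Gamma_{\mathfrak m}(M)^\ast\cong\mathrm{RHom}_A(M,R)$ of Theorem \ref{xxthm1.12} and comparing cohomology in each degree yields $\H^i_{\mathfrak m}(M)^\ast\cong\Ext^{d-i}_A(M,A)\otimes_A\Omega$; as $\Omega$ is invertible, the left side vanishes exactly when $\Ext^{d-i}_A(M,A)$ does. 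Combining this with the standard identity $\depth_A M=\inf\{i:\H^i_{\mathfrak m}(M)\ne0\}$ and $d=2$ gives $\depth_A M=2-\sup\{i:\Ext^i_A(M,A)\ne0\}$. Since $\GKdim M=2=\GKdim A$ is equivalent to $j(M)=0$ by the CM hypothesis, i.e.\ to $\Hom_A(M,A)\ne0$, we conclude that $M$ is MCM precisely when $(\star)$ holds (for $M\ne0$ the nonvanishing of $\Hom_A(M,A)$ is then forced again by CM).

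The implication $(\star)\Rightarrow$ reflexive is immediate: if $\mathrm{RHom}_A(M,A)=M^\ast$ sits in degree $0$, then the Auslander--Gorenstein derived biduality $\mathrm{RHom}_{A^{\op}}(\mathrm{RHom}_A(M,A),A)\cong M$ reads $\mathrm{RHom}_{A^{\op}}(M^\ast,A)\cong M$, concentrated in degree $0$; passing to $\H^0$ identifies the canonical map $\eta_M\colon M\to M^{\ast\ast}$ with an isomorphism. The hard part is the converse. For it I would invoke the double-Ext spectral sequence $E_2^{p,-s}=\Ext^p_{A^{\op}}(\Ext^s_A(M,A),A)\Rightarrow M$, converging to $M$ in total degree $0$. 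The Auslander condition forces $j(\Ext^s_A(M,A))\ge s$, hence $E_2^{p,-s}=0$ for $p<s$, and $\injdim A=2$ confines the nonzero terms to $0\le s\le p\le 2$. The three anti-diagonal terms then give the associated graded of a filtration $0=F^3\subseteq F^2\subseteq F^1\subseteq F^0=M$, and a degree count shows $E_\infty^{1,-1}=\Ext^1_{A^{\op}}(\Ext^1_A(M,A),A)$ and $E_\infty^{2,-2}=\Ext^2_{A^{\op}}(\Ext^2_A(M,A),A)$ are both already stable at the $E_2$ page.

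The crux, which I expect to be the main obstacle, is to identify $\eta_M$ with the edge homomorphism $M\twoheadrightarrow M/F^1\cong E_\infty^{0,0}=\ker(d_2\colon M^{\ast\ast}\to E_2^{2,-1})\hookrightarrow M^{\ast\ast}$ and then to track the single differential $d_2$. Granting this, $\eta_M$ is an isomorphism iff $F^1=0$ and $d_2=0$: injectivity kills $\Ext^1_{A^{\op}}(\Ext^1_A(M,A),A)$ and $\Ext^2_{A^{\op}}(\Ext^2_A(M,A),A)$, while surjectivity kills $\Ext^2_{A^{\op}}(\Ext^1_A(M,A),A)$. The vanishing of $\Ext^1_{A^{\op}}(\Ext^1_A(M,A),A)$ and $\Ext^2_{A^{\op}}(\Ext^1_A(M,A),A)$, together with $j(\Ext^1_A(M,A))\ge1$ (so $\Hom_{A^{\op}}(\Ext^1_A(M,A),A)=0$) and derived biduality, forces $\mathrm{RHom}_{A^{\op}}(\Ext^1_A(M,A),A)=0$ and hence $\Ext^1_A(M,A)=0$. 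Since $\Ext^2_A(M,A)$ is finite dimensional (CM gives $\GKdim\le0$) and $\Ext^2_{A^{\op}}(-,A)$ is a perfect duality on finite dimensional modules (exactly as in the proof of Lemma \ref{xxlem1.10}), the vanishing of $\Ext^2_{A^{\op}}(\Ext^2_A(M,A),A)$ forces $\Ext^2_A(M,A)=0$. This establishes $(\star)$. A convenient shortcut for half of this: reflexive modules are $2$-pure by \cite[Proposition 2.14]{QWZ}, so the submodules $F^2=E_\infty^{2,-2}$ and $F^1/F^2=E_\infty^{1,-1}$ of $M$, having $\GKdim\le1$, vanish automatically, which is cleaner than tracing the filtration by hand. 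Assembling the three steps gives reflexive $\iff(\star)\iff$ MCM.
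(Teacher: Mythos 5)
Your proposal is correct, and half of it coincides with the paper's own argument: for MCM $\Rightarrow$ reflexive the paper does exactly what you do --- local duality (Theorem \ref{xxthm1.12} together with Corollary \ref{xxcor1.13}) converts the vanishing of local cohomology in degrees $0,1$ into $\Ext^j_A(M,A)=0$ for $j=1,2$, and the double-Ext spectral sequence \cite[(E2.13.1)]{QWZ} then yields reflexivity. Where you genuinely diverge is the converse. The paper disposes of reflexive $\Rightarrow$ MCM in one line by citing \cite[Lemma 5.6]{QWZ}, which says that over an Auslander-Gorenstein CM algebra of depth $2$ every nonzero reflexive module again has depth $2$; you instead prove reflexive $\Rightarrow(\star)$ by a full analysis of the bidualizing spectral sequence (the Auslander bound $E_2^{p,-s}=0$ for $p<s$, surjectivity of $d_2\colon M^{\ast\ast}\to E_2^{2,-1}$ forced by convergence in total degree $1$, derived biduality killing $\Ext^1_A(M,A)$, and the perfect duality $\Ext^2_{A^{\op}}(-,A)$ on finite-dimensional modules killing $\Ext^2_A(M,A)$), and then travel back through local duality to recover depth $2$. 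The step you flag as the crux --- that $\eta_M$ is the edge homomorphism $M\twoheadrightarrow E_\infty^{0,0}\hookrightarrow M^{\ast\ast}$ --- is a standard fact about the bidualizing complex over Auslander-Gorenstein rings (it belongs to the circle of results of Bj{\"o}rk and Levasseur, see \cite{Le}, and is essentially what \cite[Lemma 5.6]{QWZ} rests on), so citing it is legitimate rather than a gap. One small slip in your purity shortcut: $F^1/F^2$ is a subquotient, not a submodule, of $M$; the correct formulation is that $F^1$ itself satisfies $\GKdim F^1\le 1$ (its two filtration factors have grade $\ge 1$ and $\ge 2$, hence $\GKdim\le 1$ and $\le 0$ by the CM property), so $2$-purity of a reflexive module \cite[Proposition 2.14]{QWZ} forces $F^1=0$, which handles the injectivity half without the edge-map identification --- though the surjectivity half (killing $E_2^{2,-1}$) still requires it. The trade-off: the paper's route is shorter but outsources the hard direction to \cite{QWZ}, while yours is self-contained modulo standard Auslander-Gorenstein biduality facts and, as a bonus, isolates the clean intermediate characterization that for nonzero $M$, MCM $\Leftrightarrow$ $\Ext^i_A(M,A)=0$ for all $i\ge 1$ $\Leftrightarrow$ reflexive.
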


\begin{proof} Since $A$ is Auslander Gorenstein and CM, 
the depth of $A$ (and $A^{op}$) is 2. It follows from 
\cite[Lemma 5.6]{QWZ} that the depth of a nonzero
reflexive module is 2. Therefore a reflexive module is MCM
as $\GKdim A=2$.

Conversely, let $M$ be a MCM right $A$-module. Then 
$\Ext^i_A(S,M)=0$ for $i=0,1$ where $S=A/J(A)$. This implies that
$R^i\Gamma_{\mathfrak m}(M)=0$ for $i=0,1$. By 
Theorem \ref{xxthm1.12}, $\Ext^{-i}_A(M,R)=0$ for
$i=0,1$. Since $R=\Omega[2]$ where $\Omega$ is a graded
invertible $A$-bimodule (see Corollary \ref{xxcor1.13}) 
and $[2]$ denotes the second complex shift,
$$\Ext^{j}_A(M,A) =\Ext^{j}_A(M,\Omega)\otimes \Omega^{-1}=0$$
for $j=1,2$. By the double-Ext spectral sequence 
\cite[(E2.13.1)]{QWZ}, $M$ is reflexive.
\end{proof}

\section{Proof of the main results}
\label{xxsec3}

We give the proofs of the main results here.

\begin{proof}[Proof of Theorem \ref{xxthm0.1}]
The statements works for left and right modules.
We only prove the results for right modules.

(1) By Lemma \ref{xxlem2.9}(1), there is a bijection between 
2-pure objects in $\qgr A$ and reflexive modules in 
$\grmod A$. Similarly, there is a bijection between 
2-pure objects in $\qgr B$ and reflexive modules in 
$\grmod B$. By \cite[Lemma 3.5]{QWZ}, $\qgr A$ is
equivalent to $\qgr B$. Therefore there is a bijection between 
reflexive modules in $\grmod A$ and those in $\grmod B$.
By Theorem \ref{xxthm2.10}, the reflexive modules in 
$\grmod A$ are exactly the MCM modules in $\grmod A$. Since 
$B$ is generalized AS regular, the reflexive modules in 
$\grmod B$ are precisely the projective modules in 
$\grmod B$. Since $B$ is locally finite, there are only 
finitely many indecomposable graded projective modules
over $B$ up to degree shifts and isomorphisms. This implies
that there are only finitely many indecomposable graded MCM 
modules over $A$ up to degree shifts and isomorphisms. 
The assertion follows by Definition \ref{xxdef2.8}(4).

(2) By the proof of part (1), there is a one-to-one 
correspondence between the set of the indecomposable
MCM graded right $B$-modules up to degree shifts and 
isomorphisms and the set of graded indecomposable projective 
right $B$-modules up to degree shifts and isomorphisms. The 
assertion follows from the fact that there is a one-to-one
correspondence between the set of the indecomposable
graded projective right $B$-modules up to degree shifts 
and isomorphisms and the set of graded simple right $B$-modules
up to degree shifts and isomorphisms.

(3) 
Let $F: \qgr B\to \qgr A$ be the equivalence given in
\cite[Lemma 3.5]{QWZ}. Let $(\pi_A, \omega_A)$ be the 
adjoint pair of functors given in \eqref{E1.8.3} and
\eqref{E1.8.4}. Similarly for $(\pi_B, \omega_B)$.
Then we have a functor
\begin{equation}
\label{E3.0.1}\tag{E3.0.1}
\Phi:= \pi_A\circ F\circ\pi_B: \grmod B\to \grmod A
\end{equation}
which is an equivalence of categories when restricted 
to the categories of reflexive modules over $A$ and $B$. 

Let $B=\bigoplus_{i=1}^d P_i^{\oplus u_i}$ where $u_i\geq 1$
and $\{P_1,\cdots,P_d\}$ is a complete list of indecomposable
projective right $B$-modules which are direct summands of $B$.
Let $C=\End_{B}(\bigoplus_{i=1}^d P_i)$. Then $C$ is
graded Morita equivalent to $B$. Let $M_i=\Phi(P_i)$.
Then $\{M_1,\cdots, M_d\}$ is a complete list of 
MCM right modules over $A$ up to degree shifts 
and isomorphisms. Since $\Phi$ is an equivalence, 
$C\cong \End_A(\bigoplus_{i=1}^d M_i)$ as desired.

(4) The assertion follows by the definition and from the fact 
that $\qgr A$ is equivalent to $\qgr B$ \cite[Lemma 3.5]{QWZ}
and that the global dimension of $\qgr B$ is bounded by the
global dimension of $\grmod B$.
\end{proof}

Recall that an algebra $A$ is called {\it indecomposable} if it 
cannot be written as a sum of two nontrivial algebras; this 
is equivalent to $A$ having no nontrivial central idempotents. 
Similarly, there is a definition of {\it graded indecomposable}
algebra. By \cite[Lemma 2.7]{RR1}, a graded algebra $A$ is 
indecomposable if and only if $A$ is graded indecomposable.

\begin{lemma}
\label{xxlem3.1}
Let $B_1$ and $B_2$ be two NQRs of a noetherian graded locally 
finite algebra $A$ of GKdimension two. Suppose
that $(B_1)_0=\Bbbk^{\oplus d_1}$ and $(B_2)_0=\Bbbk^{\oplus d_2}$ 
and that $B_1$ is standard. Then $B_1\cong B_2$ as graded algebras.
\end{lemma}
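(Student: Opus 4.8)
The plan is to upgrade the graded Morita equivalence between $B_1$ and $B_2$ to an honest isomorphism of graded algebras, using the standardness of $B_1$ to pin down the grading shifts that a priori appear. First I would invoke the result recalled just after Definition \ref{xxdef1.9} (from \cite{QWZ}): since $A$ has $\GKdim$ two, the two NQRs $B_1$ and $B_2$ are graded Morita equivalent. Because $(B_1)_0=\Bbbk^{\oplus d_1}$ and $(B_2)_0=\Bbbk^{\oplus d_2}$, both algebras are elementary, so each $B_i$ is basic: its indecomposable graded projectives (the summands $e_kB_i$) are pairwise non-isomorphic even up to degree shift, since their tops are the pairwise distinct degree-$0$ simples. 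As the number of graded simples up to shift is a graded Morita invariant, $d_1=d_2=:d$. Moreover, by Lemma \ref{xxlem1.10} each $B_i$ is generalized AS regular.

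Next I would realize the equivalence by a graded progenerator. Write $B_2\cong\End_{B_1}(P)$ as graded algebras for a graded progenerator $P$ over $B_1$, and decompose $P\cong\bigoplus_k P_k^{\oplus m_k}(\text{shifts})$ with every $m_k\ge 1$. The summand projections give $\sum_k m_k$ nonzero orthogonal idempotents in $\End_{B_1}(P)_0=(B_2)_0=\Bbbk^{\oplus d}$; since a product of $d$ copies of $\Bbbk$ contains at most $d$ nonzero orthogonal idempotents, $\sum_k m_k\le d$, while the generator condition forces $\sum_k m_k\ge d$. Hence every $m_k=1$ and $P\cong\bigoplus_{k=1}^d P_k(v_k)$ for integers $v_k$. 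Using $\Hom_{B_1}(P_i,P_j)\cong e_jB_1e_i$ and the shift identity $\Hom(P_i(v_i),P_j(v_j))\cong(e_jB_1e_i)(v_j-v_i)$, the algebra $B_2\cong\End_{B_1}(\bigoplus_k P_k(v_k))$ is exactly $B_1$ carrying the \emph{regraded} grading $\deg'(x)=\deg(x)+v_i-v_j$ for $x\in e_jB_1e_i$; in particular $B_2$ and $B_1$ share the same underlying algebra.

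It then remains to show this regrading is trivial. Since $B_1$ is standard it is generated in degrees $0$ and $1$, so by Remark \ref{xxrem2.2}(2) the off-diagonal part of degree $1$ is spanned by arrows, and each arrow $i\to j$ of $\mathcal G(B_1)$ is a nonzero element $x\in(e_jB_1e_i)_1$. As $B_2$ is $\N$-graded with $(B_2)_0=\Bbbk^{\oplus d}$ concentrated on the diagonal, the off-diagonal element $x$ cannot have $\deg'$-degree $0$, whence $\deg'(x)=1+v_i-v_j\ge 1$, i.e. $v_i\ge v_j$ along every arrow. Running this inequality around directed paths, $v$ is constant on each connected component of $\mathcal G(B_1)$, while $e_jB_1e_i=0$ when $i,j$ lie in different components; therefore $v_i-v_j=0$ on every nonzero block and $\deg'=\deg$. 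Thus the transported grading coincides with the original one and $B_1\cong B_2$ as graded algebras (and $B_2$ is in fact standard as well).

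The main obstacle is this last step: I must know that each connected component of $\mathcal G(B_1)$ is \emph{strongly connected}, so that the one-sided inequalities $v_i\ge v_j$ close up to equalities. This is where the hypotheses on $B_1$ enter decisively. Being a standard NQR, $B_1$ is generalized AS regular of $\GKdim$ two, so by Theorem \ref{xxthm1.4} and Proposition \ref{xxpro2.7} its Gabriel quiver is a pretzelization of a finite union of $\widetilde A\widetilde D\widetilde E$ graphs, and the defining relation $\mathcal G(B_1)^{op}={}^{\mu}\mathcal G(B_1)$ from Lemma \ref{xxlem2.6} pairs each arrow $i\to j$ with an arrow $\mu(j)\to i$, which forces strong connectivity of each component. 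I would verify this combinatorial claim carefully, since the entire rigidity statement rests on it.
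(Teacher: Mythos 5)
Your proof is correct and follows the paper's overall strategy: invoke the graded Morita equivalence of any two NQRs in GK-dimension two, match the indecomposable graded projectives up to shifts, and use standardness of $B_1$ to force all shifts to vanish. The execution differs in three ways worth recording. First, where the paper takes an abstract equivalence $\Psi$ and normalizes $\Psi(P_i)=R_i(w_i)$ with $w_i\geq 0$ and $w_1=0$, you realize the equivalence by a graded progenerator and use an idempotent count in $(B_2)_0=\Bbbk^{\oplus d}$ to show $P\cong\bigoplus_k P_k(v_k)$ with multiplicity one; this makes explicit that $B_2$ is $B_1$ carrying a block-regraded grading, a clean framework for the rigidity step. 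Second, your shift-killing argument is symmetric: positivity of the $\N$-grading plus diagonality of $(B_2)_0$ gives $v_i\geq v_j$ along every arrow $i\to j$, and strong connectivity closes these one-sided inequalities to equalities; the paper instead derives a contradiction from a single well-chosen arrow $a\to b$ with $w_a=0<w_b$ via the vanishing of $\Hom_{B_2}(R_b(w_b),R_a)_1$ --- the same mechanism, but your version avoids having to pick the arrow in the correct direction. Third, the paper gets strong connectivity by reducing to the indecomposable case and quoting \cite[Lemma 7.3]{RR2}, whereas you derive it from the Nakayama relation $Q^{\op}={}^{\mu}Q$; your combinatorial claim does check out (an arrow $i\to j$ yields an arrow $\mu(j)\to i$, and iterating, using that $\mu$ is a quiver automorphism of finite order $N$, produces a directed return path $j\to\mu^{-1}(i)\to\mu^{-1}(j)\to\mu^{-2}(i)\to\cdots\to\mu^{-N}(i)=i$, so every weakly connected component is strongly connected). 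One caveat on your citations: Proposition \ref{xxpro2.7} takes $Q^{\op}={}^{\sigma}Q$ as a \emph{hypothesis} rather than producing it, so invoking the pretzelization first and then extracting $\mu$ from Lemma \ref{xxlem2.6} is circular as written; the relation must be imported directly from \cite[Theorem 1.2(2)]{RR2}, exactly as in the paper's proof of Theorem \ref{xxthm0.2}(1), or one can simply quote \cite[Lemma 7.3]{RR2}. With that citation repaired, your per-component bookkeeping (using that $e_jB_1e_i=0$ when $i,j$ lie in different components, valid because $B_1$ is generated in degrees $0$ and $1$ so off-diagonal elements are spanned by directed paths) even dispenses with the paper's reduction to the indecomposable case.
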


\begin{proof}
By Theorem \ref{xxthm1.4}, $B_1$ is isomorphic to 
$A_2(Q,\tau)$ for some quiver $Q$ satisfying the extra conditions 
listed in Theorem \ref{xxthm1.4}. By \cite[Lemma 3.4(2)]{RR2},
the quiver $Q$ agrees with the Gabriel quiver of $B_1$ 
[Definition \ref{xxdef2.1}]. By \cite[Theorem 0.6(1)]{QWZ}, 
$B_1$ and $B_2$ are graded Morita equivalent. Let
$$\Psi: \grmod B_1\to \grmod B_2$$
be an equivalence of categories. Via $\Psi$, one sees that
$B_1$ and $B_2$ have the same number of indecomposable 
summands and $\Psi$ matches up these indecomposable summands
as graded Morita equivalences. Therefore, without loss of generality,
one can assume that both $B_1$ and $B_2$ are indecomposable.

By Theorem \ref{xxthm0.1}(2), the number of graded simple right 
$B_1$-modules (up to degree shifts and isomorphisms) is the same 
as the number of graded simple right $B_2$-modules (up to 
degree shifts and isomorphisms). That number is $d_1=d_2=:d$ as 
$$(B_1)_0=\Bbbk^{\oplus d_1}, \quad (B_2)_0=\Bbbk^{\oplus d_2}.$$ 
Let $\{e_i\}_{i=1}^d$ be the set of primitive idempotents of 
$B_1$ (and of $B_2$). Then $\{P_i:=e_i B_1\}_{i=1}^d$ is a 
complete set of indecomposable graded projective right 
$B_1$-modules up to degree shifts and isomorphisms. Similarly, 
$\{R_i:=e_i B_2\}_{i=1}^d$ is a complete set of 
indecomposable graded projective right $B_2$-modules
up to degree shifts and isomorphisms. Since $\Psi$ is an 
equivalence and the degree shifts are also equivalences,
we may assume that $\Psi(P_i)=R_i(w_i)$ for some integer
$w_i\geq 0$ with one of $w_i$ being 0. We can further assume 
that all $w_i$ are non-negative and $w_1=0$ after some permutation.

We claim that $w_i=0$ for all $i$. If not, there is an $i$
such that $w_i>0$. By \cite[Lemma 7.3]{RR2}, the quiver 
$Q$ in $B_1=A_2(Q,\tau)$ is strongly connected, 
that is, given any two vertices $i$ and $j$ in $Q$ 
there is a directed path from $i$ to $j$. In particular,
there is a path from $1$ to $i$ in $Q$ where $w_1=0$
and $w_i>0$. Along this path, choose two vertices $a\neq b$ 
such that there is an arrow from $a$ to $b$ and $w_a=0$ 
and $w_b>0$. By the above choice, 
$$\begin{aligned}
\Hom_{B_2}(R_b(w_b), R_a(w_a))_{1}&=
\Hom_{B_2}(R_b(w_b), R_a)_{1}\\
&=\Hom_{B_2}(R_b(w_b-1), R_a)_{0}=0
\end{aligned}$$
as $w_b-1\geq 0$ and $a\neq b$. Applying $\Psi^{-1}$, we obtain
that
$$\Hom_{B_1}(P_a, P_b)_{1}=0$$
which implies that there is no arrow from 
$a$ to $b$ in the Gabriel quiver $Q$. This yields a contradiction.
Therefore all $w_i=0$ and $\Psi(P_i)=R_i$ 
for all $i$. Since $\Psi$ is an equivalence, 
we have an isomorphism of algebras
$$B_1\cong\End_{B_1}(\bigoplus_i P_i)
\cong \End_{B_2}(\bigoplus_i R_i)\cong B_2$$
as desired.
\end{proof}

\begin{proof}[Proof of Theorem \ref{xxthm0.2}]
(1) By Lemma \ref{xxlem1.10}, any standard NQR $B$ is generalized
AS regular. By Theorem \ref{xxthm1.4}, $B$ is isomorphic
to $A_2(Q,\tau)$ that is given in \cite[Definition 7.5]{RR2}.

By Theorem \ref{xxthm1.4}, the arrows in the quiver $Q$ have 
weight $1$. Note that $B$ is a direct sum of finitely
many indecomposable algebras. Without loss of generality
we may assume that $B$ is indecomposable. Then $Q$ is strongly 
connected by \cite[Lemma 7.3]{RR2}. By Theorem \ref{xxthm1.4}, 
$\rho(Q)=2$. By \cite[Theorem 1.2(2)]{RR2}, there is an automorphism 
$\mu$ of $Q$ such that $Q^{\op}= {{^{\mu}}Q}$. By 
Proposition \ref{xxpro2.7}, $Q$ is a pretzelization of a graph of 
type $\widetilde{A}\widetilde{D}\widetilde{E}$.

By \cite[Lemma 3.4]{RR2} and the definition of Gabriel quiver,
$Q$ is exactly the Gabriel quiver $\mathcal{G}(B)$ of $B$, whence,
$\mathcal{G}(B)$ is a pretzelization of a graph of 
type $\widetilde{A}\widetilde{D}\widetilde{E}$.

(2) Suppose there are two standard NQRs, say $B_1$ and $B_2$, of 
$A$. The assertion follows from Lemma \ref{xxlem3.1}.
\end{proof}

\begin{definition}
\label{xxdef3.2}
Suppose $A$ satisfies the hypotheses of Theorem \ref{xxthm0.2}.
By Theorem \ref{xxthm0.2}(2), the standard NQR of $A$ is 
unique up to isomorphism. In this case, the {\it Auslander-Reiten 
quiver} of $A$ is defined to be the Gabriel quiver of the 
standard NQR of $A$.
\end{definition}

Unfortunately, not every algebra $A$ in Theorem \ref{xxthm0.1} 
has a standard NQR, as the next example shows.

\begin{example}
\label{xxex3.3}
Let $\Bbbk[x,y]$ be a commutative polynomial ring with 
$\deg\;  x>0$ and $\deg\;  y>0$.
Let $A=\Bbbk[x,y]^{\langle\sigma\rangle}$ where $\sigma$
is the automorphism of $\Bbbk[x,y]$ of order 2 defined by
$$\sigma: x\to -x, \quad y\to -y.$$ 
Then $B=\Bbbk[x,y]\ast \langle\sigma\rangle$ is a NQR of 
$A$ by \cite[Example 8.5]{QWZ}. It is well known by the 
commutative theory that $A$ has two MCMs: $A$ itself and 
the module $C$ such that $A\oplus C=\Bbbk[x,y]$. 
\begin{enumerate}
\item[(1)]
If $\deg\;  x=\deg\;  y=1$, then $B$ is a standard NQR and is 
the preprojective algebra associated to the Dynkin graph
$\widetilde{A_1}$.
Let $B'=\End_{A}(A\oplus C(1))$. Then $B'$ is another 
NQR of $A$ and is isomorphic 
to $B$ as ungraded algebras. As an ${\mathbb N}$-graded
algebra, $B'_{0}=\begin{pmatrix} 
\Bbbk &\Bbbk\oplus \Bbbk \\0& \Bbbk\end{pmatrix}$, which is 
not semisimple. 
\item[(2)]
If $\deg\;  x>1$ or $\deg\;  y>1$, then $B$ is not standard.
Note that $B_0=\Bbbk^2$. If $A$ has a standard NQR, 
say $B'$, then by Lemma \ref{xxlem3.1}, $B\cong B'$
as graded algebras. This implies that
$B$ is standard, a contradiction. Therefore $A$ does not
have a standard NQR. 
\item[(3)]
The uniqueness of $B$ fails in Theorem \ref{xxthm0.2}(2)
if we only require $B_0=\Bbbk^{\oplus d}$. To see this,
we consider the case when $\deg\;  x=\deg\;  y=2$. It is
easy to see that elements in $A$ live in degrees $4{\mathbb N}$
and elements in $C(1)$ live in degrees $4{\mathbb N}+1$.
Let $B'=\End_{A}(A\oplus C(1))$. Then $B'$ is another NQR 
of $A$ and $B'_0=\Bbbk^2=B_0$. But $B'_1=\Bbbk^{\oplus 2}$ 
and $B_1=0$. Therefore $B' \not\cong B$. 
\end{enumerate}
\end{example}

The existence of standard NQRs can be proved in the following
case.

\begin{lemma}
\label{xxlem3.4}
Let $R$ be a standard, noetherian, graded, locally finite 
Auslander Gorenstein and CM algebra with $\GKdim\geq 2$. 
Let $H$ be a semisimple Hopf algebra acting on $R$ homogeneously 
and inner-faithfully. Assume that the homological determinant
of the $H$-action \cite[Definition 3.7]{RRZ} is trivial. 
Let $A=R^H$. Suppose that $\Bbbk$ 
is algebraically closed and that the conditions in 
\cite[Example 8.5]{QWZ} hold. Then $A$ has a standard NQR. 
As a consequence, the Auslander-Reiten quiver of $A$ is a 
pretzelization of a graph of $\widetilde{A}
\widetilde{D}\widetilde{E}$ type.
\end{lemma}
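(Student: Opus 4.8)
The plan is to produce the required standard NQR as the basic algebra of the smash product $B := R \# H$, and then to read off the consequence from Theorem \ref{xxthm0.2}. First I would invoke the standing hypothesis that the conditions of \cite[Example 8.5]{QWZ} hold: this furnishes $B = R \# H$ as a graded Auslander-regular and CM noncommutative quasi-resolution of $A = R^H$ in the sense of Definition \ref{xxdef1.9}. Since $H$ is finite-dimensional, $R$ is a finitely generated module over $A = R^H$, so $\GKdim A = \GKdim R \ge 2$ and the NQR is genuinely defined.

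Next I would record two structural facts about $B$. Because $R$ is standard it is generated by $R_0$ and $R_1$, and since $H$ lies in degree $0$ the smash product is generated by $B_0 = R_0 \# H$ and $B_1 = R_1 \# H$; thus $B$ is generated in degrees $0$ and $1$. Moreover $B_0 = R_0 \# H$ is finite-dimensional, and as both $R_0$ (a product of copies of $\Bbbk$, since $R$ is standard) and $H$ are semisimple, a Maschke-type argument for semisimple Hopf actions shows that $B_0$ is semisimple; over the algebraically closed field $\Bbbk$ we may therefore write $B_0 = \prod_i M_{n_i}(\Bbbk)$.

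The core of the argument is the passage to the basic algebra. I would choose a homogeneous idempotent $e \in B_0$ that is a sum of one primitive idempotent from each matrix block of $B_0$; then $BeB = B$, so $C := eBe$ is graded Morita equivalent to $B$, and since the NQR of $A$ is well-defined up to graded Morita equivalence (the remark after Definition \ref{xxdef1.9} and \cite[Theorem 0.6(1)]{QWZ}), $C$ is again an NQR of $A$. It remains to check that $C$ is standard. On the one hand $C_0 = e B_0 e = \Bbbk^{\oplus d}$, where $d$ is the number of blocks of $B_0$. The delicate point, which I expect to be the main obstacle, is that $C$ is still generated in degrees $0$ and $1$: this is not transparent from a direct computation of $e B_n e$, but for a graded algebra with semisimple degree-zero part, generation in degree $1$ is equivalent to the graded $\Ext^1$ between simple modules being concentrated in internal degree $-1$ (this is exactly the weighted arrow count of Definition \ref{xxdef2.1}). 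Since $e$ has degree $0$, the graded Morita equivalence $e(-)\colon \grmod B \to \grmod C$ sends simple modules to simple modules with no degree shift and induces degree-preserving isomorphisms of graded $\Ext^1$-groups, so this concentration in degree $-1$ is inherited by $C$. Hence $C$ is a standard NQR of $A$, which proves the first assertion.

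Finally, for the consequence I would verify that $A$ satisfies the hypotheses of Theorem \ref{xxthm0.2}. The triviality of the homological determinant, together with the corresponding properties of $R$ and standard transfer results for invariants under a finite-dimensional semisimple Hopf action, makes $A = R^H$ Auslander-Gorenstein and CM and equips it with a balanced dualizing complex; so conditions (a)--(c) of Theorem \ref{xxthm0.1} hold for $A$, and $A$ carries the standard NQR $C$ constructed above. Applying Theorem \ref{xxthm0.2}(1), the Gabriel quiver $\mathcal{G}(C)$ is a pretzelization of a finite union of graphs of $\widetilde{A}\widetilde{D}\widetilde{E}$ type, and by Definition \ref{xxdef3.2} this quiver is precisely the Auslander-Reiten quiver of $A$; this yields the claim. (As with Theorems \ref{xxthm0.1} and \ref{xxthm0.2}, the consequence is in force in the range $\GKdim A = 2$, where the Auslander-Reiten quiver is defined.)
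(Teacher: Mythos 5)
Your proposal is correct and follows essentially the same route as the paper: both pass from the NQR $B=R\#H$ of \cite[Example 8.5]{QWZ} to its basic algebra (your $C=eBe$ is the same algebra as the paper's $C=\End_B(\bigoplus_{i=1}^d P_i)$), verify $C_0=\Bbbk^{\oplus d}$ using that $\Bbbk$ is algebraically closed, check generation in degrees $0$ and $1$, and then conclude via Theorem \ref{xxthm0.2}(1) and Definition \ref{xxdef3.2}. Your characterization of degree-one generation through graded $\Ext^1$ between simples being concentrated in internal degree $-1$, transported along the degree-preserving Morita equivalence, is simply a worked-out version of the paper's terse remark that one argues ``by working with the minimal projective resolution of the graded simples.''
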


\begin{proof} Let $B=A\# H$ as in \cite[Example 8.5]{QWZ}.
By \cite[Example 8.5]{QWZ}, $B$ is a NQR of $A$. It is 
easy to check that $B_0$ is semisimple and $B$ is generated 
by $B_0$ and $B_1$. Write $B=\bigoplus_{i=1}^d P_i^{\oplus u_i}$ 
where $u_i\geq 1$ and $\{P_1,\cdots,P_d\}$ is a complete list 
of indecomposable projective right $B$-modules which are
direct summands of $B$. Let $C=\End_{B}(\bigoplus_{i=1}^d P_i)$. 
Then $C$ is graded Morita equivalent to $B$. As a consequence,
$C$ is a NQR of $A$. By working with the minimal
projective resolution of the graded simples, one can 
shows that $C$ is generated by $C_0$ and $C_1$. Since
$\Bbbk$ is algebraically closed, it forces that $C_0
=\Bbbk^{\oplus d}$. This means that $C$ is standard.

The consequence follows from Theorem \ref{xxthm0.2}(1)
and Definition \ref{xxdef3.2}.
\end{proof}

By the above lemma, we can apply Theorems \ref{xxthm0.1}
and \ref{xxthm0.2} to the situation where $R$ is a 
preprojective algebra as studied by Weispfenning \cite{We}.

%\footnote{Q: what can we do if $A$ does not have a standard NQR $B$? One answer
%is to consider all possible ${\mathbb N}$-grading on $B$ such that 
%$B$ is standard. By a result of Gaddis, standard graded on $B$
%is unique up to isomorphisms. The next step is to show 
%standard graded is closed under graded Morita equivalence (??)}

\vspace{0.5cm}

\end{document}